\title{Computing the number of realisations of a rigid graph}
\date{}
\author[1]{Sean Dewar}
\author[2]{Georg Grasegger}
\author[2,3]{Josef Schicho}
\author[2]{Ayush Kumar Tewari}
\author[2]{Audie Warren}
\affil[1]{School of Mathematics, University of Bristol}
\affil[2]{Johann Radon Institute for Computational and Applied Mathematics, Austrian Academy of Sciences}
\affil[3]{Johannes Kepler University Linz, Research Institute for Symbolic Computation}
\colorlet{colbg}{white}
\colorlet{colfg}{black}
\colorlet{colgraphv}{colfg!75!white}
\colorlet{colgraphe}{colfg!55!white}
\colorlet{colG}{DarkSeaGreen}
\definecolor{colR}{HTML}{CC6677}
\definecolor{colO}{HTML}{DDCC77}
\definecolor{colB}{HTML}{6699CC}
\colorlet{col1}{colB}
\colorlet{col2}{colR}
\colorlet{col3}{colG}
\colorlet{col4}{colO}
\colorlet{cola}{colfg!75!colbg}
\newcommand{\R}{\mathbb{R}}
\newcommand{\C}{\mathbb{C}}
\newcommand{\ccountfuncd}[1]{c_{#1}}
\newcommand{\ccountd}[2]{\ccountfuncd{#1}(#2)}
\newcommand{\ccount}[1]{\ccountd{2}{#1}}
\newcommand{\scountfuncd}[1]{c^{\circ}_{#1}}
\newcommand{\scountfunc}[1]{\scountfuncd{2}}
\newcommand{\scountd}[2]{\scountfuncd{#1}(#2)}
\newcommand{\scount}[1]{\scountd{2}{#1}}
\newcommand{\maxsymb}{\mathbf{M}}
\newcommand{\maxccount}[2]{\maxsymb_2^{#1}(#2)}
\newcommand{\maxscount}[2]{\maxsymb_2^{\circ,#1}(#2)}
\newcommand{\modop}{\,\mathrm{mod}\,}
\newcommand{\pyrigi}{\textsc{PyRigi}}
\newtheorem{theorem}{Theorem}
\newtheorem{proposition}[theorem]{Proposition}
\newtheorem{lemma}[theorem]{Lemma}
\theoremstyle{remark}
\newtheorem{remark}[theorem]{Remark}
\newtheorem{example}[theorem]{Example}
\Crefname{example}{Example}{Examples}
\Crefname{remark}{Remark}{Remarks}
\Crefname{lemma}{Lemma}{Lemmas}
\Crefname{proposition}{Proposition}{Propositions}
\tikzstyle{vertex}=[fill=colgraphv,circle,inner sep=0pt, minimum size=4pt]
\tikzstyle{hvertex}=[vertex,minimum size=6pt,fill=colR]
\tikzstyle{edge}=[line width=1.5pt,colgraphe]
\tikzstyle{labelsty}=[font=\scriptsize]
\tikzstyle{cdarrow}=[-{Classical TikZ Rightarrow}]
\tikzstyle{aline}=[draw=cola]
\tikzstyle{bline}=[draw=cola!10!colbg]
\tikzstyle{alabelsty}=[cola,font=\scriptsize]
\tikzstyle{legend}=[minimum size=4pt]
\tikzstyle{dvertex}=[fill=colgraphv,circle,inner sep=0pt,line width=0.1pt]
\begin{document}
\maketitle

\begin{abstract}
    A graph is said to be \emph{rigid} if, given a generic realisation of the graph as a bar-and-joint framework in the plane, there exist only finitely many other realisations of the graph with the same edge lengths modulo rotations, reflections and translations.
    In recent years there has been an increase of interest in determining exactly what this finite amount is, hereon known as the \emph{realisation number}.
    Combinatorial algorithms for the realisation number were previously known for the special cases of minimally rigid and redundantly rigid graphs.
    In this paper we provide a combinatorial algorithm to compute the realisation number of any rigid graph, and thus solve an open problem of Jackson and Owen.
    We then adapt our algorithm to compute: (i) spherical realisation numbers, and (ii) the number of rank-3 PSD matrix completions of a generic partial matrix.
\end{abstract}

\section{Introduction}
A (simple undirected) graph $G$ is called \emph{rigid in $\R^2$} (or \emph{rigid} for short) if, for a generic%
\footnote{%
    In this paper, a \emph{generic} point of a variety (in our case, a complex irreducible algebraic set) with defining equations $f_1,\ldots,f_m$ is a point whose set of coordinates is algebraically independent over the field of coefficients of $f_1,\ldots,f_m$.
    For points in either $\mathbb{R}^n$ or $\mathbb{C}^n$, this condition can be simplified to the coordinates being algebraically independent over $\mathbb{Q}$.%
}
realisation $r \in (\R^2)^{|V|}$, there are only finitely many realisations $r'$, up to rotations, reflections and translations, such that for any edge $\{i,j\}\in E(G)$, 
the distance between the positions of the vertices $i,j$ in the realisation $r$ is the same as their distance in $r'$.
The well-known theorem of Pollaczek-Geiringer/Laman \cite{Geiringer1927,Laman1970} characterises the \emph{minimally rigid graphs} (i.e., those rigid graphs with no proper rigid subgraph sharing the same vertex set) with at least two vertices: they are exactly the sparse graphs fulfilling the equation $|E(G)|=2|V(G)|-3$.
Here a graph $G$ is sparse (i.e.\ $(2,3)$-sparse) if the inequality $|E(H)|\le 2|V(H)|-3$ is true for all subgraphs $H$ of $G$ with at least two vertices. 
Consequently, rigid graphs with at least two vertices can be characterised as graphs that contain a spanning subgraph that satisfies the Pollaczek-Geiringer/Laman condition.

For rigid graphs, one may ask for the number of realisations, up to rotations, reflections and translations, which have the same edge distances. For real realisations (that is, realisations in the real plane) this number depends on the choice of the generic realisation $r$ (see, for example, \cite[Fig.~1 \& 2]{JacksonOwen2019}).
However, if we pass to complex realisations, and replace the Euclidean distance by the square of its extension to the complex numbers,
and replace the group of rotations and translations by the complexification of the algebraic group $\mathbb E_2$ generated by translations, reflections, and orthogonal linear transformations,
then the number of realisations depends only on the graph -- not on the generic realisation chosen.
We name this number $\ccount{G}$ -- the realisation number of $G$. 
Note that the realisation number is defined in different ways in the literature, depending on whether reflection is considered or not.
In our case we do `mod out' reflections.
Hence, the triangle graph has realisation number 1.
That means that our notation is consistent for instance with \cite{JacksonOwen2019,DewarGrasegger2024}, while for example \cite{CapcoGalletEtAl2018,GalletGraseggerSchicho2020} yield realisation numbers that are a multiple of 2 with respect to the realisation number considered in this paper.

For minimally rigid graphs, the paper \cite{CapcoGalletEtAl2018} gives an algorithm that computes this realisation number.
Additionally, Jackson and Jord\'{a}n \cite{JacksonJordan2005} gave a characterisation of graphs that are \emph{globally rigid}, that is, those graphs where $\ccount{G}=1$.
They show that a graph $G$ with more than 3 vertices is globally rigid if and only if it has the following two properties.
Firstly, it must be \emph{redundantly rigid}, meaning that if you remove any edge the graph remains rigid. Secondly,
it must be \emph{3-connected}, meaning that if you remove any two vertices the graph remains connected.

In this paper, we give a formula to compute $\ccount{G}$ for any given rigid graph $G$, thus solving Problem 8.1 in \cite{JacksonOwen2019}.
The case where the graph $G$ is redundantly rigid but not 3-connected was already solved by Jackson and Owen \cite{JacksonOwen2019};
our contribution is a formula for the realisation number of $G$ when $G$ is not redundantly rigid.

\subsection{A rigorous definition of realisation numbers}

Before we begin, we first give a more rigorous definition of our earlier described concepts.
For any graph $G = (V,E)$ we define the \emph{measurement map}
\begin{align*}
    f_G: \mathbb C^{2|V|} &\longrightarrow \mathbb C^{|E|},\\
    \big(x_1,y_1,\ldots,x_{|V|},y_{|V|}\big) &\longmapsto \big((x_i-x_j)^2+(y_i-y_j)^2\big)_{\{i,j\} \in E}.
\end{align*}
The measurement map simply evaluates all the squared edge lengths of a given realisation of $G$. If $V$ is a finite set containing at least two elements and $K_V$ is the complete graph with vertex set $V$, we make the following definition.
\begin{equation*}
    R_V := \overline{f_{K_{V}}(\mathbb C^{2|V|})},
\end{equation*}
where the closure is the Zariski closure. The variety $R_V$ contains all possible squared distance vectors between $|V|$ points in $\mathbb C^2$, and is well-known as the (complex 2-dimensional) Cayley-Menger variety.
Let us define $E_2$ to be the complexification of the Euclidean group $\mathbb E_2$. Any squared distance vector lying in $R_V$ can be considered to be an orbit of $E_2$ within $\mathbb C^{2|V|}$.
Indeed, the variety $R_V$ has dimension $2|V|-3$, corresponding to the quotienting of $\mathbb{C}^{2|V|}$ by the 3-dimensional complex Lie group $E_2$.

Using the measurement map, we now define $c_2(G)$ to be the number of points in the set $f_G^{-1}(f_G(\rho))/E_2$ for any generic point $\rho \in \mathbb C^{2|V|}$
(See \cite{JacksonOwen2019} for a proof that $c_2(G)$ is well-defined).
With this, a graph $G$ is rigid if and only if $c_2(G)$ is finite.

\subsection{Statement of result}\label{sec:mainresults}

As stated, our main result is an equation for the realisation number of a rigid graph $G$ which is not redundantly rigid.
This equation involves realisation numbers of graphs on fewer number of vertices than $G$, allowing recursive applications.

Suppose that we would like to calculate the realisation number of the graph $G$. If $G$ is globally rigid, then we know that its realisation number is $1$ and we are done.
If not, then by the classification of globally rigid graphs by Jackson and Jordan \cite{JacksonJordan2005}, either $G$ fails to be $3$-connected, or fails to be redundantly rigid (or possibly both).
Consider the case when $G$ is not redundantly rigid. If this happens, there exists some edge $e$ such that $G - e$ is not rigid. Then $G-e$ has a decomposition into maximal rigid subgraphs $G_1,\ldots,G_m$.
Let $H_1,\ldots,H_m$ be corresponding  minimally rigid spanning subgraphs; that is, each $H_i$ is a spanning minimally rigid subgraph of $G_i$.
We then define $H:= H_1 \cup \ldots \cup H_m \cup \{e\}$, and for the moment claim that $H$ is minimally rigid (this is proved later).
With this terminology set, we can state our results.
We include the case of $G$ not being 3-connected by Jackson and Owen \cite{JacksonOwen2019} for completeness.

\begin{theorem}\label{thm:main}
    Let $G$ be a graph which is rigid but not globally rigid. Then one of the following two cases hold.
    \begin{enumerate}[label=(\roman*)]
        \item\label{it:main:notred} $G$ is not redundantly rigid: In this case, let $e$ be such that $G-e$ is not rigid, 
        let $G_1,\ldots,G_m$ be the maximal rigid subgraphs of $G-e$,
        and let $H_i$ be a minimally rigid spanning subgraph of $G_i$ for each $i \in \{1,\ldots,m\}$.
        Then the graph $H := H_1 \cup \cdots \cup H_m \cup \{e\}$ is minimally rigid,
        and we have
        \[ \ccount{G} = \ccount{H} \prod_{i=1}^m \frac{\ccount{G_i}}{\ccount{H_i}} .\]
        \item\label{it:main:notcon} $G$ is not 3-connected: In this case, let $K,L$ be induced subgraphs of $G$ such that $V(K) \cup V(L) = V(G)$, $V(K) \cap V(L) = \{u,v\}$, and $E(K) \cup E(L) = E(G)$.
        Then, given $s = \{u,v\}$, we have
        \begin{equation*}
            \ccount{G} =
            \begin{cases}
                2\ccount{K} \ccount{L+s} &\text{if $s \notin E(G)$, $K$ is rigid, $L$ is not rigid}, \\
                2\ccount{K+s} \ccount{L+s} &\text{if $s \in E(G)$ or both $K$ and $L$ are rigid}.
            \end{cases}
        \end{equation*}
    \end{enumerate}
\end{theorem}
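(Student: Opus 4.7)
The proof of (ii) is due to Jackson and Owen \cite{JacksonOwen2019}, so my plan concerns only (i). The strategy has three parts: (a) verify that $H$ is minimally rigid; (b) interpret $\ccount{H}/\ccount{G}$ and each $\ccount{H_i}/\ccount{G_i}$ as the degree of a dominant projection between closures of measurement-map images; (c) show that the projection degree for $(G,H)$ factors as the product of those for the pairs $(G_i,H_i)$.

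For (a), since $G$ is rigid and $G-e$ is not, the rank of $G-e$ in the two-dimensional rigidity matroid is $2|V|-4$; as the rigid components $G_1,\ldots,G_m$ pairwise share at most one vertex, they are the matroid-connected components of $G-e$, so $\sum_i (2|V(G_i)|-3) = 2|V|-4$. Replacing each $G_i$ by the spanning minimally rigid $H_i$ preserves the rigid-component structure of $H-e$, so $\rank(H) = 2|V|-3$, and a direct count gives $|E(H)| = 1 + (2|V|-4) = 2|V|-3$, forcing $H$ to be minimally rigid. For (b), set $X_G := \overline{f_G(\mathbb{C}^{2|V|})}$ and $X_H := \overline{f_H(\mathbb{C}^{2|V|})}$; both are irreducible of dimension $2|V|-3$. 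The morphism $\pi\colon X_G \to X_H$ that forgets the $E(G)\setminus E(H)$ coordinates is dominant, hence generically finite of some degree $d_{G,H}$; fibre-chasing through the factorisation $f_H = \pi\circ f_G$ yields $\ccount{H} = d_{G,H}\cdot \ccount{G}$, and the same argument applied to each inclusion $H_i \subseteq G_i$ gives $d_{G_i,H_i} = \ccount{H_i}/\ccount{G_i}$.

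The heart of the argument is (c). I would identify $X_G$ with $\mathbb{C}\times \prod_i X_{G_i}$ (the $\mathbb{C}$ factor parametrising the squared length of $e$) and $X_H$ with $\mathbb{C}\times\prod_i X_{H_i}$. The inclusions $\subseteq$ are automatic from restriction; for equality, each ambient product is irreducible of dimension $1 + \sum_i (2|V(G_i)|-3) = 2|V|-3 = \dim X_G$, so $X_G$ must fill the ambient (a closed irreducible subvariety of the same dimension as an irreducible ambient variety equals it, and similarly for $X_H$). Under these identifications $\pi$ becomes $\operatorname{id}_{\mathbb{C}} \times \prod_i \pi_i$, whence $d_{G,H} = \prod_i d_{G_i,H_i}$; combining with (b) delivers the stated formula $\ccount{G} = \ccount{H}\prod_i \ccount{G_i}/\ccount{H_i}$. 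The main obstacle is confirming equality rather than strict containment in these identifications: this amounts to the claim that no hidden algebraic relation ties the length data of distinct rigid components together (nor pins down $\ell_e$), and the dimension/irreducibility matching is what certifies this without any case analysis on the shared-vertex structure of the $G_i$.
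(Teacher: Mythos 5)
Your overall strategy for case~(i) is sound and reaches the correct formula, but by a route that is genuinely different from (in fact dual to) ours. We factor the graph map $p_G$ ``upstairs'' through the decomposition map $d_S\colon R_{V(G)}\to\prod_i R_{V(G_i)}\times R_e$ on Cayley--Menger varieties, and the key step (\Cref{lem:fiberdegree}) is computing $\deg(d_S)=\ccount{H}/\prod_i\ccount{H_i}$ by running the same factorisation for the minimally rigid graph $H$. You instead work entirely ``downstairs'' with the coordinate projection $\pi\colon I_G\to I_H$ between image varieties, obtain $\deg(\pi)=\ccount{H}/\ccount{G}$ from $p_H=\pi\circ p_G$, and then split $\pi$ as $\operatorname{id}_{\mathbb C}\times\prod_i\pi_i$ using the product decomposition $I_G=\mathbb C\times\prod_i I_{G_i}$ (our \Cref{imagevarietydecomp}, which you prove exactly as we do). This spares you from establishing dominance of the decomposition map, at the price of leaning on the product structure of $I_G$ more heavily; both routes ultimately rest on the same three inputs, namely Jord\'an's rank formula (\Cref{lem:ly}), the product decomposition of $I_G$, and the identification $\deg(p_G)=\ccount{G}$ (\Cref{prop}).

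Three points need repair. First, the maximal rigid subgraphs of $G-e$ are \emph{not} in general the connected components of the rigidity matroid: a minimally rigid $G_i$ with at least three edges splits into $|E(G_i)|$ singleton matroid components. The identity $\sum_i(2|V(G_i)|-3)=r(G-e)$ is nevertheless true, but it should be invoked as Jord\'an's lemma (\Cref{lem:ly}) rather than derived this way. Second, in part~(a) you pass from $r(H-e)=2|V|-4$ to $r(H)=2|V|-3$ without comment; you need that $e$ is not in the span of $E(H-e)$, which holds because $e$ is not even in the span of the larger set $E(G-e)$ (as $r(G)>r(G-e)$). With that inserted, your ``rank equals edge count'' argument is a clean alternative to the circuit argument in our \Cref{lem:Hismr}. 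Third, ``fibre-chasing through $f_H=\pi\circ f_G$'' is not literally multiplicativity of degrees, because $f_G$ is not generically finite (its fibres contain $E_2$-orbits of positive dimension); the chase must be carried out for $p_H=\pi\circ p_G$ on the Cayley--Menger variety, using $\deg(p_G)=\ccount{G}$, which is exactly what \Cref{prop} in the appendix supplies.
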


Given \Cref{thm:main}, we can calculate the realisation number for any rigid graph. Indeed, let $G$ be any rigid graph. If $G$ is minimally rigid, then by the existing algorithm of \cite{CapcoGalletEtAl2018}, its realisation number can be computed. If $G$ is globally rigid, its realisation number is $1$.
If neither of these occur, we apply \Cref{thm:main}; we can write $\ccount{G}$ in terms of realisation numbers of graphs on strictly fewer vertices.
The algorithm then proceeds recursively; if any of the smaller graphs are minimally rigid or globally rigid, we can compute their realisation number.
If not, we apply \Cref{thm:main} again.
This process clearly concludes at some point, since we have a finite number of vertices in $G$ and the number of vertices in the graphs considered strictly decreases at each stage.

It is natural to ask how often \Cref{thm:main} is needed when computing realisation numbers.
\Cref{tab:graph_numbers} shows how many graphs on few vertices are minimally/globally/redundantly rigid. What is particularly interesting is the last column of \Cref{tab:graph_numbers},
which shows the number of rigid graphs which are neither minimally rigid nor redundantly rigid and which do not have a 2-cut,
i.\,e., those graphs that require \Cref{thm:main}\ref{it:main:notred} (the more computationally involved of the two cases) to compute their realisation number.

\begin{table}[ht]
    \centering
    \begin{tabular}{rrrrrrrr}
         \toprule
               &         &           &          &             &         & not min. rigid \\[-5pt]
               &         & minimally & globally & redundantly &         & not red. rigid \\[-5pt]
         $|V|$ & rigid   & rigid     & rigid    & rigid       & 2-cut   & 3-con          \\\midrule
         6     & \pgfmathprintnumber{42}      & \pgfmathprintnumber{13}        & \pgfmathprintnumber{15}       & \pgfmathprintnumber{17}          & \pgfmathprintnumber{25}      & \pgfmathprintnumber{0}              \\
         7     & \pgfmathprintnumber{377}     & \pgfmathprintnumber{70}        & \pgfmathprintnumber{132}      & \pgfmathprintnumber{142}         & \pgfmathprintnumber{241}     & \pgfmathprintnumber{1}              \\
         8     & \pgfmathprintnumber{6199}    & \pgfmathprintnumber{608}       & \pgfmathprintnumber{2346}     & \pgfmathprintnumber{2496}        & \pgfmathprintnumber{3815}    & \pgfmathprintnumber{14}             \\
         9     & \pgfmathprintnumber{180878}  & \pgfmathprintnumber{7222}      & \pgfmathprintnumber{80433}    & \pgfmathprintnumber{83046}       & \pgfmathprintnumber{100009}  & \pgfmathprintnumber{234}            \\
         10    & \pgfmathprintnumber{9464501} & \pgfmathprintnumber{110132}    & \pgfmathprintnumber{5105493}  & \pgfmathprintnumber{5180419}     & \pgfmathprintnumber{4350705} & \pgfmathprintnumber{5765}           \\\bottomrule
    \end{tabular}
    \caption{Number of graphs with different properties for small number of vertices. }
    \label{tab:graph_numbers}
\end{table}

\subsection{Examples}

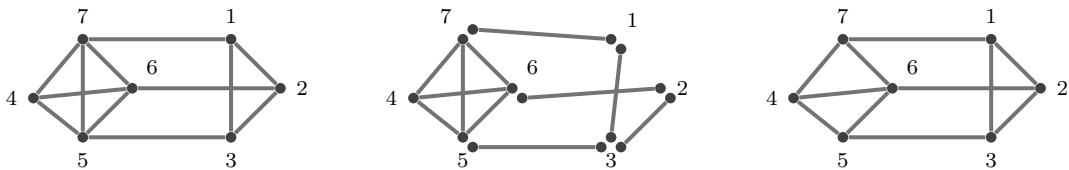
\begin{figure}[t]
    \centering
    \begin{tikzpicture}[scale=1.3] 
        \node[vertex,label={[labelsty]above:1}] (1) at (1.5,1) {};
        \node[vertex,label={[labelsty]right:2}] (2) at (2,0.5) {};
        \node[vertex,label={[labelsty]below:3}] (3) at (1.5,0) {};
        \node[vertex,label={[labelsty]left:4}] (4) at (-0.5,0.4) {};
        \node[vertex,label={[labelsty]below:5}] (5) at (0,0) {};
        \node[vertex,label={[labelsty]45:6}] (6) at (0.5,0.5) {};
        \node[vertex,label={[labelsty]above:7}] (7) at (0,1) {};
        \draw[edge] (1)--(2) (1)--(3) (1)--(7) (2)--(3) (2)--(6) (3)--(5) (4)--(5) (4)--(6) (4)--(7) (5)--(6) (5)--(7) (6)--(7);
    \end{tikzpicture}\qquad
    \begin{tikzpicture}[scale=1.3]
        \node[vertex,label={[labelsty]25:1}] (1a) at (1.5,1) {};
        \node[vertex] (1b) at ($(1a)+(0.1,-0.1)$) {};
        \node[vertex,label={[labelsty]right:2}] (2a) at (2,0.5) {};
        \node[vertex] (2b) at ($(2a)+(0.1,-0.1)$) {};
        \node[vertex,label={[labelsty]below:3}] (3a) at (1.5,0) {};
        \node[vertex] (3b) at ($(3a)+(0.1,-0.1)$) {};
        \node[vertex] (3c) at ($(3a)+(-0.1,-0.1)$) {};
        \node[vertex,label={[labelsty]left:4}] (4) at (-0.5,0.4) {};
        \node[vertex,label={[labelsty]below:5}] (5a) at (0,0) {};
        \node[vertex] (5b) at ($(5a)+(0.1,-0.1)$) {};
        \node[vertex,label={[labelsty]45:6}] (6a) at (0.5,0.5) {};
        \node[vertex] (6b) at ($(6a)+(0.1,-0.1)$) {};
        \node[vertex,label={[labelsty]100:7}] (7a) at (0,1) {};
        \node[vertex] (7b) at ($(7a)+(0.1,0.1)$) {};
        \draw[edge] (1b)--(3a) (1a)--(7b) (2b)--(3b) (2a)--(6b) (3c)--(5b) (4)--(5a) (4)--(6a) (4)--(7a) (5)--(6a) (5)--(7a) (6a)--(7a);
    \end{tikzpicture}\qquad
    \begin{tikzpicture}[scale=1.3] 
        \node[vertex,label={[labelsty]above:1}] (1) at (1.5,1) {};
        \node[vertex,label={[labelsty]right:2}] (2) at (2,0.5) {};
        \node[vertex,label={[labelsty]below:3}] (3) at (1.5,0) {};
        \node[vertex,label={[labelsty]left:4}] (4) at (-0.5,0.4) {};
        \node[vertex,label={[labelsty]below:5}] (5) at (0,0) {};
        \node[vertex,label={[labelsty]45:6}] (6) at (0.5,0.5) {};
        \node[vertex,label={[labelsty]above:7}] (7) at (0,1) {};
        \draw[edge] (1)--(2) (1)--(3) (1)--(7) (2)--(3) (2)--(6) (3)--(5) (4)--(5) (4)--(6) (4)--(7) (5)--(6) (6)--(7);
    \end{tikzpicture}
    \caption{(Left): The rigid graph $G$ given in \Cref{ex:plane}. (Middle): The maximally rigid subgraphs of $G' = G - \{1,2\}$. (Right): The minimally rigid subgraph $H$ of $G$.}
    \label{fig:nonred}
\end{figure}

\begin{example}\label{ex:plane}
    Let us consider the graph $G$ from \Cref{fig:nonred}~(left).
    This graph is clearly not minimally rigid since it has one too many edges.
    It is not redundantly rigid either and hence not globally rigid, since the deletion of the edge $\{1,2\}$ would result in a non-rigid graph $G'$.
    
    We split the graph $G'$ into its maximal rigid components $G_1,\ldots,G_6$ as shown in \Cref{fig:nonred} (middle).
    Let $G_1$ be the graph with vertices $4,5,6,7$. It is the only subgraph that is not minimally rigid.
    We compute $\ccount{G_1}=1$ since it is globally rigid.
    Each other subgraph $G_i$ is a single edge, and hence the respective $H_i$ are also single edges.
    This in turn implies $\ccount{G_i}=\ccount{H_i}=1$ for each $i \neq 1$.
    It is not so hard to convince oneself that the graph $H_1$ obtained from $G_1$ by deleting the edge $\{5,7\}$ is minimally rigid with $\ccount{H_1}=2$.
    The algorithm given in \cite{CapcoGalletEtAl2018} applied to the graph $H=H_1\cup\ldots\cup H_6\cup\{1,2\}$ gives that $\ccount{H}=24$.
    By applying \Cref{thm:main}\ref{it:main:notred} we get $\ccount{G}=\ccount{H} \prod_{i=1}^m \frac{\ccount{G_i}}{\ccount{H_i}}=24\cdot \frac{1}{2}=12$.
\end{example}

\begin{example}\label{ex:plane2}
    Now consider the graph $\widetilde{G}$ from \Cref{fig:nonred2} (left).
    Again, this graph is neither minimally rigid nor is it redundantly rigid.
    The algorithm described in \cite{CapcoGalletEtAl2018} gives that the graph $\widetilde{H} = \widetilde{G} -\{5,7\}$ has a realisation number of 672.
    By combining \Cref{thm:main}\ref{it:main:notred} ($e=\{10,11\}$) with the observation that the subgraph generated on the first 7 vertices is the graph $G$ from \Cref{ex:plane},
    we have that $\ccount{\widetilde{G}}=\ccount{\widetilde{H}} \cdot \frac{\ccount{G}}{\ccount{H}}=672\cdot \frac{12}{24}=336$.
\end{example}

\begin{figure}[t]
    \centering
    \begin{tikzpicture}[scale=0.8,baseline={(0,0)}]
        \node[vertex,label={[labelsty]60:6}] (1) at (0.2,3) {};
        \node[vertex,label={[labelsty]right:5}] (2) at (2,2) {};
        \node[vertex,label={[labelsty]left:7}] (3) at (-2,2) {};
        \node[vertex,label={[labelsty]below:4}] (4) at (0,1.5) {};
        \node[vertex,label={[labelsty]right:8}] (5) at (0.7,0.5) {};
        \node[vertex,label={[labelsty]left:9}] (6) at (-0.7,0.5) {};
        \node[vertex,label={[labelsty]right:10}] (7) at (1.2,0) {};
        \node[vertex,label={[labelsty]left:11}] (8) at (-1.2,0) {};
        
        \node[vertex,label={[labelsty]above:2}] (a) at (0,6) {};
        \node[vertex,label={[labelsty]right:3}] (b) at (2,5) {};
        \node[vertex,label={[labelsty]left:1}] (c) at (-2,5) {};
        
        \draw[edge] (a)--(b) (a)--(c) (b)--(c) (a)--(1) (b)--(2) (c)--(3);            
        \draw[edge] (1)--(2) (1)--(3) (1)--(4) (2)--(3) (2)--(4) (2)--(7) (3)--(4) (3)--(8) (4)--(5) (4)--(6) (5)--(6) (5)--(7) (6)--(8) (7)--(8);
    \end{tikzpicture}\qquad
    \begin{tikzpicture}[scale=0.8,baseline={(0,0)}]
        \node[vertex,label={[labelsty]60:6}] (1) at (0.2,3) {};
        \node[vertex,label={[labelsty]right:5}] (2a) at (2,2) {};
        \node[vertex,label={[labelsty]left:7}] (3a) at (-2,2) {};
        \node[vertex] (4a) at (0,1.5) {};            
        \node[vertex] (2b) at (2.3,1.7) {};
        \node[vertex,label={[labelsty]right:10}] (7b) at (1.5,-0.3) {};            
        \node[vertex] (3b) at (-2.3,1.7) {};
        \node[vertex,label={[labelsty]left:11}] (8b) at (-1.5,-0.3) {};            
        \node[vertex,label={[labelsty]below:4}] (4b) at (0,1.2) {};
        \node[vertex,label={[labelsty]below:8}] (5b) at (0.7,0.2) {};
        \node[vertex,label={[labelsty]below:9}] (6b) at (-0.7,0.2) {};            
        \node[vertex] (5a) at (0.9,0.5) {};
        \node[vertex] (7a) at (1.4,0) {};            
        \node[vertex] (6a) at (-0.9,0.5) {};
        \node[vertex] (8a) at (-1.4,0) {};
        
        \node[vertex,label={[labelsty]above:2}] (a) at (0,6) {};
        \node[vertex,label={[labelsty]right:3}] (b) at (2,5) {};
        \node[vertex,label={[labelsty]left:1}] (c) at (-2,5) {};
        
        \draw[edge] (a)--(b) (a)--(c) (b)--(c) (a)--(1) (b)--(2a) (c)--(3a);            
        \draw[edge] (1)--(2a) (1)--(3a) (1)--(4a) (2a)--(3a) (2a)--(4a) (3a)--(4a);            
        \draw[edge] (2b)--(7b) (3b)--(8b);            
        \draw[edge] (4b)--(5b) (4b)--(6b) (5b)--(6b);            
        \draw[edge] (5a)--(7a) (6a)--(8a);
    \end{tikzpicture}\qquad
    \begin{tikzpicture}[scale=0.8,baseline={(0,0)}]
        \node[vertex,label={[labelsty]60:6}] (1) at (0.2,3) {};
        \node[vertex,label={[labelsty]right:5}] (2) at (2,2) {};
        \node[vertex,label={[labelsty]left:7}] (3) at (-2,2) {};
        \node[vertex,label={[labelsty]below:4}] (4) at (0,1.5) {};
        \node[vertex,label={[labelsty]right:8}] (5) at (0.7,0.5) {};
        \node[vertex,label={[labelsty]left:9}] (6) at (-0.7,0.5) {};
        \node[vertex,label={[labelsty]right:10}] (7) at (1.2,0) {};
        \node[vertex,label={[labelsty]left:11}] (8) at (-1.2,0) {};
        
        \node[vertex,label={[labelsty]above:2}] (a) at (0,6) {};
        \node[vertex,label={[labelsty]right:3}] (b) at (2,5) {};
        \node[vertex,label={[labelsty]left:1}] (c) at (-2,5) {};
        
        \draw[edge] (a)--(b) (a)--(c) (b)--(c) (a)--(1) (b)--(2) (c)--(3);            
        \draw[edge] (1)--(2) (1)--(3) (1)--(4) (2)--(4) (2)--(7) (3)--(4) (3)--(8) (4)--(5) (4)--(6) (5)--(6) (5)--(7) (6)--(8) (7)--(8);
    \end{tikzpicture}
    \caption{(Left): The rigid graph $\widetilde{G}$ given in \Cref{ex:plane2}. (Middle): The maximally rigid subgraphs of $\widetilde{G} - \{10,11\}$. (Right): The minimally rigid subgraph $\widetilde{H}$ of $\widetilde{G}$.}\label{fig:nonred2}
\end{figure}
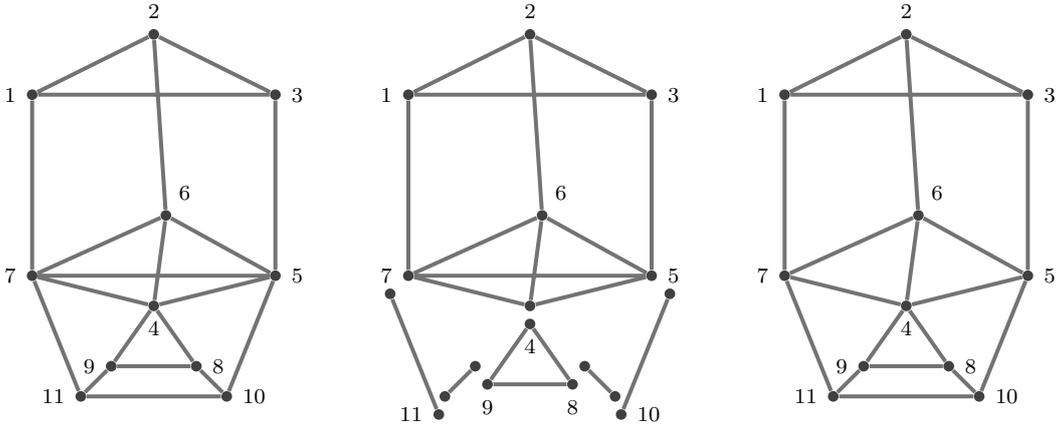

\begin{remark}
    We note here that an alternative approach for computing $\ccount{G}$ in \Cref{ex:plane} would be to use \cite[Thm.~6.9]{JacksonOwen2019}.
    This gives that $\ccount{G}=12\ccount{G_1}\ccount{G''}=12$, where $G''$ is the triangle graph formed on vertices $\{1,2,3\}$.
    However, there is no such previously known technique that can compute $\ccount{\widetilde{G}}$ in \Cref{ex:plane2}.
\end{remark}

\subsection{Structure of paper and notation}

This paper is structured as follows.
We cover all the required background material for rigidity theory in \Cref{sec:rigiditytheory}.
In \Cref{sec:proof_of_main}, we provide a proof of \Cref{thm:main}.
This algorithm is then adapted in \Cref{sec:sphere} to compute spherical realisation numbers and rank-3 PSD matrix completions.
We conclude the paper in \Cref{sec:compute} with a range of computational results.

We use the following graph theory notation throughout the paper.
All graphs we consider are simple and undirected and have at least two vertices. A subgraph of $G$ is a graph $H$ such that 
$V(H)\subseteq V(G)$ and $E(H)\subseteq E(G)$. Subgraphs may be  induced, meaning $E(H)$ is the subset of all edges 
with vertices in $V(H)$, or spanning, meaning $V(G)=V(H)$. 
If $e\in E(G)$, then $G-e$ is defined as the subgraph of $G$ with edge set $E(G)\setminus\{e\}$ and vertex set $V(G)$.
If $\{u,v\}\subset V(G)$ is a subset of cardinality 2, then $G+\{u,v\}$ is defined as the graph with vertex set $V(G)$ and edge set $E(G)\cup \{\{u,v\} \}$. We often abuse notation and denote by $e$ both an edge of a graph and the subgraph consisting of the single edge $e$.

\section{Background on rigidity theory}\label{sec:rigiditytheory}

In this section we cover the required background material on rigidity theory from two different perspectives; firstly using combinatorics, and then using algebraic geometry.

\subsection{Combinatorial rigidity}

We define the \emph{rank} of a graph $G$ to be the non-negative integer value
\begin{equation*}
    r(G) := \max \big\{ |E(H)| : H \text{ is a sparse subgraph of } G  \big\}.
\end{equation*}
A consequence of the Pollaczek-Geiringer/Laman condition for rigidity is that a graph $G$ with at least two vertices is rigid if and only if $r(G) = 2|V(G)|-3$.
In fact, the map $r_G:F \mapsto r((V(G),F))$ for $F \subseteq E(G)$ defines a rank function (in the matroidal sense) on the edge set $E(G)$;
with this, we say that the matroid $(E(G),r_G)$ is the \emph{rigidity matroid of $G$}.
We refer any reader eager to know more on rigidity matroids (and matroids in general) to \cite{GraverServatiusServatius1993}.

Through the language of rigidity matroids, many concepts in matroid theory have direct analogues in rigidity theory.
For this paper, we focus on two in particular:
\begin{itemize}
    \item Given an edge set $F \subseteq E(G)$ and a spanning subgraph $H \subset G$,
    we say that $F$ is \emph{in the span of} $H$ if $r(H+F) = r(H)$.
    It can be shown that if $F$ is in the span of $H$ then $F$ is in the span of $H'$ for any $H \subseteq H' \subseteq G$ (see, for example, \cite[Lemma 1.4.3]{Oxley2011}).
    \item A graph $G$ with two or more vertices is a \emph{circuit} if it is rigid, $|E(G)| = 2|V(G)|-2$ and $r(G-e) = r(G)$ for all $e \in E(G)$;
    as a consequence of the Pollaczek-Geiringer/Laman condition, this is equivalent to $E(G)$ being a circuit in the rigidity matroid of $G$.
\end{itemize}
Circuits also lead to an alternative definition for minimal rigidity; specifically, a graph is minimally rigid if and only if it is rigid and contains no circuit subgraphs.

Using these combinatorial concepts, we prove the following key lemma.

\begin{lemma} \label{lem:Hismr}
    Let $G$ be a rigid graph.
    Suppose that there exists an edge $e\in E(G)$ such that $G-e$ is not rigid. 
    Let $G_1,\dots,G_m$ be the maximal
    rigid subgraphs of $G-e$. 
    For $i=1,\dots,m$, let $H_i$ be a minimally rigid spanning subgraph of $G_i$, and let $H$ be the spanning subgraph of $G$ with edge set $\bigcup_{i=1}^m E(H_i)\cup\{e\}$.
    Then $H$ is minimally rigid.
\end{lemma}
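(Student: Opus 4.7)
The plan is to show that $H$ is rigid and that $|E(H)|=2|V(G)|-3$; together these give minimal rigidity via the Pollaczek--Geiringer/Laman condition. For rigidity, I would use that each $H_i$, being a minimally rigid spanning subgraph of $G_i$, has the same rigidity-matroid rank as $G_i$, so $E(G_i)$ lies in the span of $E(H_i)$. Taking the union over $i$ and adjoining $e$, one obtains that $E(G)$ lies in the span of $E(H)$, so $r(H)=r(G)=2|V(G)|-3$.

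For the edge count, I would first note that any two of the $G_i$ share at most one vertex---otherwise the union of two rigid subgraphs sharing two or more vertices would itself be rigid, contradicting maximality of the $G_i$. Hence the $E(G_i)$ partition $E(G-e)$, and $|E(H)|=1+\sum_i(2|V(G_i)|-3)$. Since $G$ is rigid while $G-e$ is not and removing an edge drops the rank by at most one, we have $r(G-e)=2|V(G)|-4$. The edge count then reduces to the identity $\sum_i(2|V(G_i)|-3)=r(G-e)$, which would yield $|E(H)|=2|V(G)|-3$ as required.

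The hard part is this last identity. It is equivalent to $\bigcup_i E(H_i)$ being independent in the rigidity matroid: since $\bigcup_i E(H_i)$ spans $E(G-e)$ by the same closure argument as above, independence forces $r(\bigcup_i E(H_i))=\sum_i(2|V(G_i)|-3)=r(G-e)$. To prove independence, I would consider a putative circuit $C\subseteq\bigcup_i E(H_i)$ and invoke the standard structural result that circuits in the $2$D rigidity matroid are $2$-vertex-connected. Since each $H_i$ is independent, $C$ must use edges from at least two different $H_i$'s; the pairwise intersection bound $|V(H_i)\cap V(H_j)|\leq 1$ then forces the ``contact points'' between components along $C$ to be cut vertices, contradicting $2$-connectivity. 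Extra care is needed when $C$ spans three or more $H_i$'s, since no single contact vertex need be a cut vertex of $C$; here the additional input is that the overlap graph of the components meeting $C$ is triangle-free (three rigid subgraphs pairwise sharing distinct vertices would have a rigid union by a direct Laman count, contradicting maximality), and combining this with the sparsity of each $H_i$ and the $2$-connectivity of $C$ yields the contradiction.
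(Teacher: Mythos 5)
Your overall strategy (show $H$ is rigid and has exactly $2|V(G)|-3$ edges) is a legitimate variant of the paper's argument, and your first two steps are fine: the span argument giving $r(H)=r(G)=2|V(G)|-3$ is essentially identical to the paper's, and the reduction of the edge count to the identity $\sum_i\bigl(2|V(G_i)|-3\bigr)=r(G-e)$ is correct. The problem is your proof of that identity, i.e.\ of the independence of $\bigcup_i E(H_i)$. Your plan is to rule out a circuit $C\subseteq\bigcup_i E(H_i)$ meeting several components using only the $2$-connectivity of circuits plus combinatorics of the overlap pattern, but this does not close. Two concrete failure points: (a) excluding triangles in the overlap graph does not exclude longer cycles or denser overlap patterns --- for instance, four components pairwise sharing four (or six) distinct single vertices give a $2$-connected, triangle-overlap-free union, and the naive count $|E(C)|\le\sum_i(2|V(C\cap H_i)|-3)$ combined with the incidence bookkeeping for shared vertices fails to contradict $|E(C)|=2|V(C)|-2$ already for a $K_4$-shaped overlap of four components; (b) your auxiliary claim that three rigid graphs pairwise sharing distinct vertices have a rigid union is not established by ``a direct Laman count'' --- an edge count gives $2|V|-3$ edges but not independence, so rigidity of such a union needs a gluing lemma or the very rank formula you are trying to prove.

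The clean way out, which is what the paper does, is to use the fact (recorded in the paper's background section) that circuits of the $2$-dimensional rigidity matroid are \emph{rigid}, not merely $2$-connected. Then any circuit $C\subseteq\bigcup_i E(H_i)\subseteq E(G-e)$ is a rigid subgraph of $G-e$, hence contained in a single maximal rigid subgraph $G_j$; since the edge sets $E(G_1),\dots,E(G_m)$ partition $E(G-e)$ and $E(H_i)\subseteq E(G_i)$, this forces $E(C)\subseteq E(H_j)$, contradicting the independence of the minimally rigid graph $H_j$. (The paper runs this argument directly on $H$ rather than on $H-e$, after first noting that $e$ lies outside the span of $H-e$ and so belongs to no circuit of $H$, thereby concluding that $H$ is rigid and circuit-free, hence minimally rigid.) Alternatively, the identity $\sum_i(2|V(G_i)|-3)=r(G-e)$ is exactly the cited rank formula of Jord\'an (the paper's \Cref{lem:ly}), which you could simply invoke; with either repair your edge-count route becomes a correct, essentially equivalent proof.
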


\begin{proof}
    To show that $H$ is minimally rigid, it suffices to show that it rigid and contains no circuits.
    We first show that $H$ is rigid.
    Since $E(G_i) \setminus E(H_i)$ is in the span of $H_i$ (as $r(H_i) = r(G_i)$),
    the set $E(G-e) \setminus E(H-e)$ is in the span of $H-e$.
    Hence, $r(H-e) = r(G-e)$.
    Since $G$ is rigid but $G-e$ is not, we have 
    \begin{equation*}
        r(G) = 2|V(G)|-3, \qquad r(H-e) = r(G-e) = 2|V(G)|-4.
    \end{equation*}
    This implies that $e$ is not in the span of $G-e$,
    and so $e$ is also not in the span of $H-e$.
    Thus, $r(H) = r(H-e)+1 = 2|V(G)|-3$, and $H$ is rigid.

    Now suppose for contradiction that $H$ contains a circuit $H'$.
    As $e$ is not in the span of $H-e$,
    then $e$ is not contained in the span $H'$;
    in particular, $H' \subseteq H-e$.
    As all circuits are rigid,
    it follows that $H'$ is contained in some maximal rigid subgraph $G_j$ of $G-e$.
    However, it now follows that $H'$ is contained in $H_j$,
    contradicting that $H_j$ is sparse.
\end{proof}

We also require the following characterisation of the rank of a graph.

\begin{lemma}[{\cite[Lemma 4.2]{Jordan2010}}]\label{lem:ly}
    Let $G_1,\ldots,G_m$ be the maximal rigid subgraphs of a connected graph $G$ with at least two vertices. Then
    \begin{equation*}
        r(G) := \sum_{i=1}^m \big( 2|V(G_i)| - 3 \big).
    \end{equation*}
\end{lemma}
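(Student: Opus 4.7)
My plan is to prove the equality via matching inequalities, both resting on a structural property of the maximal rigid subgraphs that I would deduce from the classical \emph{gluing lemma} of planar rigidity: if two rigid subgraphs share at least two vertices, then their union is rigid. From this I would extract two consequences: (i) any two distinct maximal rigid subgraphs meet in at most one vertex, so the edge sets $E(G_1),\ldots,E(G_m)$ are pairwise disjoint and partition $E(G)$; and (ii) every rigid subgraph $R\subseteq G$ is contained in a unique $G_i$. The argument for (ii) is to pick any $e\in E(R)$, let $G_i$ be the maximal rigid subgraph containing $e$ (its endpoints are then shared by $R$ and $G_i$), and apply the gluing lemma to get that $R\cup G_i$ is rigid; maximality of $G_i$ then forces $R\subseteq G_i$.

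For the upper bound $r(G)\le\sum_{i=1}^m(2|V(G_i)|-3)$, I would take a sparse subgraph $S\subseteq G$ with $|E(S)|=r(G)$ and use (i) to write $E(S)=\bigsqcup_i(E(S)\cap E(G_i))$. Each piece is a sparse subset of $E(G_i)$, so has size at most $r(G_i)=2|V(G_i)|-3$; summing yields the bound. For the matching lower bound, I would pick a minimally rigid spanning subgraph $T_i$ of each $G_i$, let $T$ be the spanning subgraph of $G$ with edge set $\bigcup_i E(T_i)$, and observe from (i) that $|E(T)|=\sum_i(2|V(G_i)|-3)$. It then suffices to check that $T$ is sparse. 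If not, $E(T)$ contains a rigidity-matroid circuit $C$, and the subgraph on the edge set $C$ is rigid (by the characterisation of circuits recalled just before this lemma). By (ii), this subgraph lies inside some $G_j$, hence $C\subseteq E(T)\cap E(G_j)=E(T_j)$; but $T_j$ is minimally rigid and so sparse, contradicting that it would then contain a circuit.

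The main obstacle is the structural step (ii), which hinges on the gluing lemma. That lemma is a standard tool in planar rigidity and, once available, reduces the rest of the proof to short counting arguments; connectedness of $G$ is only needed to ensure that the $G_i$ collectively cover $V(G)$, so that $T$ really is a spanning subgraph whose edge count matches the stated sum. It is worth noting that the same gluing-based reasoning is already implicit in the proof of \Cref{lem:Hismr}, where circuits contained in $H-e$ are forced into a single maximal rigid subgraph for essentially the same reason.
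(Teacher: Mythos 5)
The paper does not actually prove \Cref{lem:ly}: it is imported wholesale by citation to Jord\'{a}n, so there is no in-paper argument to compare against, and your write-up supplies a proof where the paper supplies none. Your argument is correct and self-contained modulo the gluing lemma (two rigid subgraphs sharing at least two vertices have rigid union), which is precisely the external fact the paper itself leans on implicitly when it asserts in \Cref{sec:proof_of_main} that the maximal rigid subgraphs form an edge decomposition, and again in the proof of \Cref{lem:Hismr} when a circuit of $H-e$ is forced into a single maximal rigid subgraph --- so you are not assuming more than the authors do. Both inequalities check out: the upper bound follows from the edge partition plus $r(G_i)=2|V(G_i)|-3$ for rigid $G_i$; the lower bound exhibits a sparse subgraph $T$ of the right cardinality, with sparsity verified via the circuit characterisation quoted just before the lemma (a non-sparse edge set contains a matroid circuit, whose underlying subgraph is rigid, hence by your step (ii) sits inside one $G_j$ and therefore inside the minimally rigid $T_j$ --- a contradiction). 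This is the standard derivation of the rank formula in terms of rigid components (a special case of the Lov\'{a}sz--Yemini formula). One cosmetic remark: connectedness is not really needed for either inequality as you run them --- the count $|E(T)|=\sum_i\big(2|V(G_i)|-3\big)$ and the bound $r(G)\ge|E(T)|$ hold whether or not $T$ is spanning --- but carrying the hypothesis does no harm.
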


\subsection{Algebraic geometry and rigidity}

We first recall that a morphism $f:X \rightarrow Y$ of varieties is called \emph{dominant} if $f(X)$ is Zariski dense in $Y$, and we call $f$ \emph{generically finite} if the fibre $f^{-1}(y):= \{ x \in X : f(x) = y\}$ is a finite set for generic $y \in Y$.
The \emph{degree} of a dominant and generically finite map is then the number of points in $f^{-1}(y)$ for any generic $y \in Y$.
An important property for degrees and fiber dimension is that they often behave multiplicatively and additively with respect to composition: 
specifically, if $f:X \rightarrow Y$ and $g:Y \rightarrow Z$ are dominant then 
\begin{equation*}
    \dim ((g \circ f)^{-1}(\gamma)) = \dim g^{-1}(\alpha) + \dim f^{-1}(\beta) \qquad \text{ for generic } \beta \in Y, ~ \alpha, \gamma \in Z,
\end{equation*}
and if $f,g$ are also generically finite then $\deg(g \circ f) = \deg(g) \deg(f)$.

We now wish to view our realisation numbers as degrees of dominant maps of varieties.
We note that different authors use various definitions of a `realisation variety', using techniques such as pinning vertices (e.g., \cite{JacksonOwen2019,DewarGrasegger2024}).
In this paper we do not wish to use pinning, as it makes the upcoming \emph{decomposition map} (\Cref{sec:proof_of_main}) difficult to define.
Instead, we want to exploit the following useful property of the variety $R_V$: by projecting away coordinates we can reach $R_{V'}$ for any $V' \subseteq V$.

We construct our dominant maps for computing realisation numbers as follows.
Let $G = (V,E)$ be a graph. 
We set the \emph{image variety} $I_G \subset \mathbb{C}^{|E|}$ to be the Zariski closure of the projection of $R_V$ onto the entries in $E$.
Since each image variety is also the image of the corresponding measurement map,
the variety $I_G$ has dimension $r(G)$, and is irreducible, as the closure of the image of an irreducible variety.
From this, we define the \emph{graph map}
\begin{equation*}
    p_G: R_V \rightarrow I_G,
\end{equation*}
which projects a vector in $R_V$ to the entries given by the edges in $E$. 
Note that, by definition, each graph map is dominant.
Moreover, it follows from the factorisation $f_G = p_G \circ f_{K_{V(G)}}$ that a graph is rigid if and only if its graph map is generically finite.
Furthermore, if $G$ is minimally rigid then we have $I_G = \mathbb C^{|E|}$, since in this case $\dim(I_G) = r(G) = 2|V| - 3 = |E|$.

If $G$ is rigid, then the number of connected components of $f_G^{-1}(f_G(\rho))$ for generic $\rho$ is $2c_2(G)$; see, for example, \cite[Lemma 5]{LubbesMakhulEtAl2025}.
Similarly, if $G$ is rigid then $\deg(p_G) = c_2(G)$.
A proof of this fact can be found in \Cref{app}.

\section[Proof of Main Theorem]{Proof of \Cref{thm:main}}\label{sec:proof_of_main}

As stated previously, \Cref{thm:main}\ref{it:main:notcon} was originally proven by Jackson and Owen \cite[Theorem 6.6]{JacksonOwen2019}.
We now proceed with \Cref{thm:main}\ref{it:main:notred}.
The main idea of the proof is to factorise the graph map $p_G$ in a suitable way, using the following map. 

We define an \emph{edge decomposition} of a graph $G$ as a set of connected subgraphs $S=\{G_1,\dots,G_m\}$ such that $E(G)$ is the
disjoint union of $E(G_1),\dots,E(G_m)$, each $G_i$ contains at least one edge, and any pair $(G_i, G_j)$ have at most one vertex in common.
We can then define the \emph{decomposition map} 
\[
    d_S:R_{V(G)}\to\prod_{i=1}^m R_{V(G_i)}
\] 
given by restricting a distance vector to the vertex set of each subgraph.
For any connected graph blue with at least two vertices, the set of maximal rigid subgraphs is an edge decomposition;
two maximal rigid subgraphs share at most one vertex and never share an edge, and any edge is rigid as a subgraph and is therefore contained in some maximal rigid subgraph. 

For this section we make use of the following lemma, which gives information about the decomposition map. 

\begin{lemma} \label{lem:fiberdegree}
    Let $H$ be a graph which is minimally rigid, and let $S=\{H_1,\dots,H_t\}$ be an edge decomposition of $H$ into minimally rigid subgraphs.
    Then the decomposition map $d_S:R_{V(H)}\to\prod_{i=1}^t R_{V(H_i)}$ is dominant and generically finite and has degree
    $\frac{\ccountd{2}{H}}{\ccountd{2}{H_1}\cdots \ccountd{2}{H_t}}$.
\end{lemma}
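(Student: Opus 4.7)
The approach is to factor the graph map $p_H$ through the decomposition map $d_S$. Concretely, since $E(H)$ is the disjoint union of $E(H_1),\dots,E(H_t)$, the identity
\[
p_H \;=\; \Bigl(\prod_{i=1}^t p_{H_i}\Bigr) \circ d_S
\]
holds: applying $d_S$ restricts a vector in $R_{V(H)}$ to the various $R_{V(H_i)}$, and each $p_{H_i}$ then reads off the coordinates indexed by $E(H_i)$, whose disjoint union is $E(H)$. Because $H$ and all $H_i$ are minimally rigid, the image varieties are full affine spaces, so $I_H = \mathbb{C}^{|E(H)|}$ and $I_{H_i} = \mathbb{C}^{|E(H_i)|}$, and in particular the codomain of $\prod_i p_{H_i}$ coincides with that of $p_H$.

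Next I would check that the source and target of $d_S$ have equal dimension. Using minimal rigidity of $H$ and each $H_i$, combined with $|E(H)| = \sum_i |E(H_i)|$,
\[
\dim R_{V(H)} = 2|V(H)|-3 = |E(H)| = \sum_{i=1}^t \bigl(2|V(H_i)|-3\bigr) = \dim \prod_{i=1}^t R_{V(H_i)}.
\]
To see that $d_S$ is dominant, let $Z$ be the Zariski closure of $d_S(R_{V(H)})$. Since $p_H$ is dominant onto $I_H$, the factorisation and the fact that $\prod_i p_{H_i}$ is generically finite (each $p_{H_i}$ is, since each $H_i$ is rigid) give $\dim Z \geq \dim I_H = \dim \prod_i R_{V(H_i)}$. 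Irreducibility of the product of irreducible varieties $\prod_i R_{V(H_i)}$ then forces $Z$ to be the whole target, so $d_S$ is dominant; equal source and target dimension then implies $d_S$ is generically finite.

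The degree formula now follows from the multiplicativity of degrees under composition: $\deg(p_H) = \deg\bigl(\prod_i p_{H_i}\bigr) \cdot \deg(d_S)$. Each $p_{H_i}$ has degree $c_2(H_i)$ by the fact quoted in \Cref{app}, and since fibres of a product map are products of fibres, $\deg\bigl(\prod_i p_{H_i}\bigr) = \prod_i c_2(H_i)$. Combined with $\deg(p_H) = c_2(H)$, rearranging yields $\deg(d_S) = c_2(H)/\prod_i c_2(H_i)$. The main obstacle is justifying dominance of $d_S$: this is not formal from the factorisation alone, and genuinely relies on the dimension count together with the fact that $\prod_i p_{H_i}$ is itself dominant and generically finite onto $\mathbb{C}^{|E(H)|}$; the rest of the argument is then routine.
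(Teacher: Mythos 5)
Your proposal is correct and follows essentially the same route as the paper: factor $p_H = p_S \circ d_S$ with $p_S = (p_{H_1},\dots,p_{H_t})$, use minimal rigidity and the edge-decomposition property to show all three varieties have dimension $|E(H)|$, deduce dominance and generic finiteness of $d_S$ from dominance of $p_H$, and conclude by multiplicativity of degrees. Your spelled-out argument for dominance (via the closure $Z$ of the image and irreducibility of the product) is just a more explicit version of the paper's one-line dimension argument.
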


\begin{proof}
    The graph map $p_H:R_{V(H)}\to\C^{|E(H)|}$ (which is dominant and generically finite, since $H$ is minimally rigid) factors into the following commutative diagram.
    \begin{center}       
        \begin{tikzpicture}
            \node[] (r)at (0,0) {$R_{V(H)}$};
            \node[] (p) at (4,0) {$\prod_{i=1}^t R_{V(H_i)}$};
            \node[] (c) at (4,-2) {$\C^{|E(H)|}$};
            \draw[cdarrow] (r) to node[above,labelsty] {$d_S$} (p);
            \draw[cdarrow] (r) to node[below,labelsty] {$p_H$} (c);
            \draw[cdarrow] (p) to node[right,labelsty] {$p_S$} (c);
        \end{tikzpicture}
    \end{center}
    where $p_S$ is the product of graph maps of the subgraphs $H_i$
    \[
        p_S:=(p_{H_1},\dots,p_{H_t}):\prod_{i=1}^t R_{V(H_i)}\to \prod_{i=1}^t \C^{|E(H_i)|} = \C^{|E(H)|}.
    \]
    We first show that $d_S$ is a dominant map. Indeed, we have
    \begin{equation*}
        \dim\left(\prod_{i=1}^t R_{V(H_i)}\right)  = \sum_{i=1}^t \dim(R_{V(H_i)}) 
       =  \sum_{i=1}^t \left( 2|V(H_i)| - 3 \right) 
         = \sum_{i=1}^t |E(H_i)|
         = |E(H)|,
    \end{equation*}
    where the third equality is implied by each $H_i$ being minimally rigid, and the last equality is because $S$ is an edge decomposition.
    Therefore, all three varieties in the commutative diagram have the same dimension, and since $p_H$ is dominant, so is $d_S$.
    Therefore, all three maps in the diagram are dominant and generically finite. For generically finite and dominant maps, the degrees multiply when composed.
    We therefore have 
    \begin{equation*}
        \deg(p_H) = \deg(d_S)\deg(p_S) \implies \ccount{H} = \prod_{i=1}^t \ccount{H_i} \cdot \deg(d_S)
    \end{equation*}
    which proves the result.
\end{proof}

\begin{remark}
    The proof of \Cref{lem:fiberdegree} extends to higher dimensions, however the decomposition of $H$ into minimally rigid subgraphs sharing at most one vertex is no longer guaranteed.
    This is important for the dominance of the decomposition map $d_S$;
    if, in three dimensions, two of the subgraphs, say $H_1$ and $H_2$ of $G$ share a pair of vertices $v_1,v_2$, then after applying $d_S$ to any realisation $\rho$ of $G$, we must have that, for $d_S(\rho) = (\rho_1,\rho_2,\ldots,\rho_t)$, the equation $d(\rho_1(v_1),\rho_1(v_2)) = d(\rho_2(v_1),\rho_2(v_2))$ is always satisfied.
    This shows that $d_S$ cannot be a dominant map in such a situation. 
    The dominance of $d_S$, however, is crucial for the proof of \Cref{thm:main}. Hence, extending our results to higher dimensions is a subject for further research.
\end{remark}

We now prove a small lemma regarding the image varieties of an edge decomposition. 
Recall that $I_G := \overline{p_G(R_{V(G)})}$.

\begin{lemma}\label{imagevarietydecomp}
    Let $G$ be a rigid graph such that $G-e$ is not rigid for some edge $e \in E(G)$.
    Let $G_1,\ldots,G_m$ be the maximal rigid subgraphs of $G-e$. Then we have
    \begin{equation*}
        I_G = I_{G_1} \times \dots \times I_{G_m} \times \mathbb{C}.
    \end{equation*}
\end{lemma}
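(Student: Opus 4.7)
The plan is to prove the equality of varieties by first establishing containment in one direction and then matching dimensions using irreducibility.

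First I would observe that the maximal rigid subgraphs $G_1,\ldots,G_m$ partition the edge set $E(G-e)$: every edge of $G-e$ is itself a rigid subgraph (so it lies in some $G_i$), and any two distinct maximal rigid subgraphs share at most one vertex (hence no edges). Thus $E(G) = \{e\} \sqcup \bigsqcup_{i=1}^m E(G_i)$, which lets me canonically identify $\mathbb{C}^{|E(G)|}$ with $\mathbb{C}^{|E(G_1)|} \times \cdots \times \mathbb{C}^{|E(G_m)|} \times \mathbb{C}$.

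Next, for the containment $I_G \subseteq I_{G_1} \times \cdots \times I_{G_m} \times \mathbb{C}$, I would take any $\rho \in R_{V(G)}$ and note that its projection onto entries indexed by pairs in $V(G_i)$ lies in $R_{V(G_i)}$ (this holds on the image of the measurement map, since a realisation of $K_{V(G)}$ restricts to a realisation of $K_{V(G_i)}$, and then passes to the Zariski closure by continuity of projection). Consequently the block of $p_G(\rho)$ indexed by $E(G_i)$ is $p_{G_i}$ of this projection and therefore lies in $I_{G_i}$, while the coordinate indexed by $e$ sits trivially in $\mathbb{C}$. Hence $p_G(R_{V(G)}) \subseteq I_{G_1} \times \cdots \times I_{G_m} \times \mathbb{C}$, and taking Zariski closure proves the containment.

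Finally, I would upgrade this to equality by a dimension count combined with irreducibility. The image variety $I_G$ is irreducible of dimension $r(G) = 2|V(G)|-3$, and the product on the right is irreducible (as a product of irreducible varieties) of dimension $\sum_{i=1}^m (2|V(G_i)|-3) + 1$. Applying \Cref{lem:ly} to each connected component of $G-e$ and using the additivity of the rigidity rank over disjoint unions of graphs, this sum equals $r(G-e)+1$. Since $G$ is rigid but $G-e$ is not, and adding a single edge raises the rank by at most one, we have $r(G-e) = r(G)-1$, so both varieties have the same dimension. An irreducible subvariety of an irreducible variety of the same dimension must equal it, and the equality follows. The only subtle point is the invocation of \Cref{lem:ly} when $G-e$ is disconnected; this is the expected main obstacle, and it is resolved by summing the ranks over the connected components, which is compatible with the product structure on the right-hand side.
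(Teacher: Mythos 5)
Your proof is correct and follows essentially the same route as the paper's: the containment is immediate, and equality follows from irreducibility together with a dimension count via \Cref{lem:ly} and the observation that $r(G-e)=r(G)-1=2|V(G)|-4$. Your worry about $G-e$ being disconnected is moot---a rigid graph on at least three vertices is $2$-connected and hence bridgeless, so $G-e$ is connected---but your component-wise workaround is valid in any case.
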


\begin{proof}
    The inclusion $I_G \subseteq I_{G_1} \times \ldots \times I_{G_m} \times \mathbb{C}$ is clear.
    It now suffices to show that both sets have the same dimension:
    since both $I_G$ and $I_{G_1} \times \ldots \times I_{G_m} \times \mathbb{C}$ are irreducible and closed in the Zariski topology, this immediately implies the sets are equal.
    Since $G$ is rigid, we have $\dim I_G= r(G) = 2|V(G)|-3$.
    By \Cref{lem:ly},
    we then have 
    \begin{equation*}
        \dim (I_{G_1} \times \ldots \times I_{G_m}) = \sum_{i=1}^m \dim I_{G_i} = \sum_{i=1}^m \big( 2|V(G_i)| - 3 \big) = r(G-e) = 2|V(G)|-4.
    \end{equation*}
    Hence, $I_{G_1} \times \ldots \times I_{G_m} \times \mathbb{C}$ has dimension $2|V(G)|-3$, as required. We note that the end factor of $\mathbb C$ should be thought of as the image variety of the single edge $e$.
\end{proof}

We are now ready to prove the first case of \Cref{thm:main}.

\begin{lemma} \label{lem:12}
    Let $G$ be a rigid graph such that $G-e$ is not rigid for some edge $e \in E(G)$. 
    Let $G_1,\ldots,G_m$ be the maximal rigid subgraphs of $G-e$,
    and let $H_i$ be a minimally rigid spanning subgraph of $G_i$ for each $i \in \{1,\ldots,m\}$.
    Then the graph $H := H_1 \cup \cdots \cup H_m \cup \{e\}$ is minimally rigid,
    and we have
    \[ 
        \ccount{G} = \ccount{H} \prod_{i=1}^m \frac{\ccount{G_i}}{\ccount{H_i}} .
    \]
\end{lemma}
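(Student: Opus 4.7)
The plan is to factor the graph maps $p_G$ and $p_H$ through a common decomposition map and then compare degrees. The minimal rigidity of $H$ is immediate from \Cref{lem:Hismr}, so the task reduces to the degree formula. From the proof of \Cref{lem:Hismr} I will also reuse the rank identity $r(G) = r(G-e)+1$, equivalently, that $e$ is not in the span of $G-e$.

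First I would set up $S := \{G_1,\ldots,G_m,e\}$ and $T := \{H_1,\ldots,H_m,e\}$ as families of subgraphs of $G$ and $H$ respectively. The only nontrivial check that $S$ is an edge decomposition is that any two of its members share at most one vertex: distinct maximal rigid subgraphs $G_i,G_j$ meet in at most one vertex (else their union would be rigid, contradicting maximality), and if $e=\{u,v\}$ had both endpoints in some $V(G_i)$ then $e$ would lie in the span of $G_i \subseteq G-e$, contradicting the rank identity above. Since $V(G_i)=V(H_i)$, the decomposition maps for $S$ and $T$ coincide as a single map $d\colon R_{V(G)}\longrightarrow \prod_{i=1}^m R_{V(G_i)}\times R_{V(e)}$, because decomposition maps depend only on vertex sets.

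Applying \Cref{lem:fiberdegree} to $H$ with the decomposition $T$ (whose parts are all minimally rigid, including the single edge $e$, with $\ccount{e}=1$) shows that $d$ is dominant and generically finite, with $\deg(d) = \ccount{H}/\prod_{i=1}^m \ccount{H_i}$. For $G$, I would factor $p_G = p_S \circ d$, where $p_S := (p_{G_1},\ldots,p_{G_m},\mathrm{id}_{R_{V(e)}})$; by \Cref{imagevarietydecomp} the codomain of $p_S$ is exactly $I_G$. Using \Cref{lem:ly} together with the rigidity of each $G_i$, all three varieties $R_{V(G)}$, $\prod_i R_{V(G_i)}\times R_{V(e)}$, and $I_G$ have dimension $2|V(G)|-3$. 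Hence $p_S$ is a product of dominant, generically finite graph maps, so it is itself dominant and generically finite of degree $\prod_i \ccount{G_i}$. Multiplicativity of degrees then yields $\ccount{G} = \prod_i \ccount{G_i}\cdot \deg(d)$.

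Eliminating $\deg(d)$ between the two degree identities gives the claimed formula. I expect the main obstacle to be the verification that $S$ really is an edge decomposition — in particular, ruling out that the edge $e$ sits inside a single maximal rigid component of $G-e$; once the rank identity from \Cref{lem:Hismr} is in hand, the remainder is a clean recombination of \Cref{lem:fiberdegree}, \Cref{lem:ly}, and \Cref{imagevarietydecomp}.
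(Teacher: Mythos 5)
Your proposal is correct and follows essentially the same route as the paper: factor $p_G$ through the decomposition map shared with $H$, apply \Cref{lem:fiberdegree} to get its degree, use \Cref{imagevarietydecomp} and \Cref{lem:ly} for the codomain of $p_S$, and multiply degrees. The extra verifications you include (that $\{G_1,\ldots,G_m,e\}$ is an edge decomposition, and in particular that $e$ cannot have both endpoints in a single $G_i$) are correct and make explicit a point the paper leaves implicit.
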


\begin{proof}
    As in the proof of \Cref{lem:fiberdegree}, the graph map $p_G:R_{V(G)}\to I_G$ factors into the product of graph maps of subgraphs
    \[
        p_S:=(p_{G_1},\dots,p_{G_m},p_e):\prod_{i=1}^m R_{V(G_i)}\times R_e\to \prod_{i=1}^{m}I_{G_i}\times\C = I_G
    \] 
    and the
    decomposition map $d_S$; this can be seen in the following commutative diagram, where we note that by \Cref{imagevarietydecomp} we have $I_G=I_{G_1}\times\cdots\times I_{G_m}\times I_e$:
    \begin{center}
        \begin{tikzpicture}
            \node[] (r)at (0,0) {$R_{V(G)}$};
            \node[] (p) at (4,0) {$\prod_{i=1}^m R_{V(G_i)} \times R_e$};
            \node[] (i) at (4,-2) {$I_G$};
            \draw[cdarrow] (r) to node[above,labelsty] {$d_S$} (p);
            \draw[cdarrow] (r) to node[below,labelsty] {$p_G$} (i);
            \draw[cdarrow] (p) to node[right,labelsty] {$p_S$} (i);
        \end{tikzpicture}
    \end{center}
    The map $d_S$ is also the decomposition map for the decomposition $\{H_1,\dots,H_m,e\}$ of $H$, since $R_{V(G)} = R_{V(H)}$ and $R_{V(G_i)} = R_{V(H_i)}$ for all $i$. By \Cref{lem:Hismr}, $H$ is minimally rigid, and by \Cref{lem:fiberdegree}, $d_S$ is a generically finite and dominant map with degree $\frac{\ccount{H}}{\ccount{H_1}\cdots \ccount{H_m}}$.
    For the subgraph $e$, note that $R_e=I_e=\C$, and $p_e$ is the identity map. We then have
    \begin{equation*}
        \deg(p_G) = \deg(d_S)\cdot \deg(p_S) \implies \ccount{G} = \frac{\ccount{H}}{\ccount{H_1}\cdots\ccount{H_m}}\prod_{i=1}^m \ccount{G_i}
    \end{equation*}
    as needed (note that $p_e$ has degree 1 and is omitted from the formula).
\end{proof}

We can now finish the proof of our main result.

\begin{proof}[Proof of \Cref{thm:main}]
    Let $G$ be a graph that is rigid but not globally rigid.
    By \cite{Hendrickson1992},
    $G$ is either not redundantly rigid, or $G$ is not 3-connected.
    If $G$ is either not redundantly rigid, then there exists an edge $e \in E(G)$ such that $G-e$ is not rigid.
    Case \ref{it:main:notred} now follows by \Cref{lem:12}.
    If $G$ is not 3-connected, we see that Case \ref{it:main:notcon} holds by \cite[Theorem 6.6]{JacksonOwen2019}.
\end{proof}

\section{Spherical realisations and PSD matrix completions}\label{sec:sphere}

In this section we describe how our algorithm can be adapted to computing spherical realisation numbers, and an application to positive semi-definite matrix completion.

\subsection{Computing spherical realisation numbers}

Our proof of \Cref{thm:main}\ref{it:main:notred} also extends to counting realisations on the (complex) sphere $\mathbb S^2$.
Indeed, a graph is minimally rigid in $\mathbb C^2$ if and only if is also minimally rigid on the sphere (see for instance \cite{SaliolaWhiteley2007}).
Furthermore, a graph is globally rigid in $\mathbb C^2$ if and only if it is globally rigid in $\mathbb S^2$;
the real variant of this statement follows from a result of Connelly and Whiteley \cite[Theorem 12]{ConnellyWhiteley2010},
which can then be adapted to the complex setting using a combination of results from Gortler and Thurston \cite[Theorem 1]{GortlerThurston2014} and the first and second authors \cite[Theorem 1.2]{DewarGrasegger2024}.
Therefore, Jackson and Jord\'{a}n's characterisation of globally rigid graphs also holds in the sphere, and we have the same division into two cases. 

The relevant changes to our definitions are as follows; firstly, we would define a spherical measurement map for $G=(V,E)$ as follows:
\begin{equation*}
    f_G^{\mathbb S} : (\mathbb S^2)^{|V|} \rightarrow \mathbb C^{|E|}
\end{equation*}
which yields a new `realisation variety' given by $R_V^{\mathbb S} := \overline{f_{K_{V}}^{\mathbb S}((\mathbb S^2)^{|V|})}$.
The group $E_2$ would be changed to the (complex) orthogonal group  $O(3)$. The definition of an edge decomposition remains the same.
At this point the proofs continue in the same way as above;
any result we have used which was specific for the plane  (for instance the Pollaczek-Geiringer/Laman condition) also holds for spherical realisations, as noted above.

We denote the spherical realisation number by $\scountfunc\ $.
An algorithm to compute this number for minimally rigid graphs is provided in \cite{GalletGraseggerSchicho2020}.
It is known from \cite{DewarGrasegger2024} that $\ccount{G}\leq\scount{G}$ for all rigid graphs.

\begin{example}
    The graph from \Cref{ex:plane} has $\scount{G}=16$ since $\scount{H}=32$.
    This is the smallest rigid but not minimally rigid graph where $\ccount{G}\neq\scount{G}$.
\end{example}

\subsection{Counting rank-3 PSD matrix completions}

The spherical realisation number has an additional application with regards to positive semidefinite matrix completion.

To be more specific, set $\mathcal{S}_+^n(r)$ to be the variety of $n \times n$ real symmetric PSD matrices with rank at most $r$.
Given a graph $G=([n],E)$, we now define the projection
\begin{equation*}
    \pi_G : \mathcal{S}_+^n(r) \rightarrow \mathbb{R}^{|E|} \times \mathbb{R}^n, ~ M \mapsto \left( (M_{ij})_{\{i,j\} \in E} , (M_{kk})_{k \in [n]} \right).
\end{equation*}
Any vector $\lambda$ in the image of $\pi_G$ is said to be a \emph{rank-$r$ partial PSD matrix}, and any matrix in the fiber $\pi_G^{-1}(\lambda)$ is said to be a \emph{completion of $\lambda$}.
For more background on this problem and its links to rigidity theory, see \cite{SingerCucuringu2010,JacksonJordanTanigawa2016}.

Ideally, we wish to understand the number of completions for a generic rank-$r$ partial PSD matrix.
However, as is the case for counting real realisations, this number depends on the choice of generic realisation.
We can solve this issue by allowing for complex solutions.
We extend $\mathcal{S}_{+}^n(r)$ to all complex symmetric matrices that can be decomposed as $A^T A$ for some $r \times n$ complex matrix $A$;
the logic here being that a real symmetric matrix is PSD with rank at most $r$ if and only if such a decomposition exists with $A$ being real.
Any such complex matrix that is mapped to our chosen generic  rank-$r$ partial PSD matrix is said to be a \emph{complex completion}.
There is always exactly one complex completion of a rank-1 partial PSD matrix, and the number of complex completions of a rank-2 partial PSD matrix given by a connected graph with $k$ biconnected components is exactly $2^{k-1}$.

By combining the techniques developed by Singer and Cucuringu \cite{SingerCucuringu2010} and a result of the first and second author \cite[Theorem 1.2]{DewarGrasegger2024},
we see that the $(r-1)$-dimensional spherical realisation number of a graph is exactly the number of complex completions of any rank-$r$ partial PSD matrix.
Using this correspondence, we can extend our counting algorithm for rank-$r$ partial PSD matrices to the case of $r=3$.
Specifically, if $G$ is rigid, then $c_2^\circ(G)$ is exactly the number of complex completions of any generic rank-3 partial PSD matrix, and we apply the spherical variant of our counting algorithm to compute this number; otherwise, there are infinitely many complex completions.

\section{Computational results}\label{sec:compute}
In this section we collect results and statistics of computations of realisation numbers for graphs with a reasonably small number of vertices.
As such we have computed realisation numbers for all rigid graphs with less than 11 vertices both for the plane and the sphere. The results can be found in \cite{DatasetRigidRealisations}.
The computations have been done in python based on \pyrigi\ \cite{PyRigi} and the code will be made available via \pyrigi.

\subsection{Computational results for realisation numbers}

We focus now on rigid graphs with $2|V|-3+k$ edges and a high number of realisations.
For $k=0$, i.e.\ the minimally rigid case, this was done before in \cite{GraseggerKoutschanTsigaridas2020,Grasegger2025}.
Let $\maxccount{k}{n}$ be the maximal $\ccount{G}$ over all rigid graphs $G=(V,E)$ with $|E|=2|V|-3+k$.
We show in \Cref{tab:maxk} some values of $\maxccount{k}{n}$, which can be found by computing the realisation numbers of all rigid graphs with the respective number of vertices and edges.
Note that for $n\leq 10$ we have computed realisation numbers of all rigid graphs which satisfy the desired edge count. They can be found at \cite{DatasetRigidRealisations}.
For $n\geq 11$ we only computed realisations numbers for all rigid graphs with the respective number of edges and minimum degree 3, since adding a degree 2 vertex to a graph always doubles the number of realisations (e.g., \cite[Lemma 7.1]{DewarGrasegger2024}).
The two missing entries have not been computed yet, due to the large number of graphs.
For $k=0$ these numbers were computed in \cite{CapcoGalletEtAl2018}.
\begin{table}[ht]
    \centering
    \begin{tabular}{rrrrr}
         \toprule
         $n=|V|$ & $\maxccount{0}{n}$ & $\maxccount{1}{n}$ & $\maxccount{2}{n}$ & $\maxccount{3}{n}$\\\midrule
         6     &    12 &     4 &   2 &   2 \\
         7     &    28 &    12 &   4 &   4 \\
         8     &    68 &    28 &  12 &  12 \\
         9     &   172 &    72 &  28 &  28 \\
         10    &   440 &   172 &  80 &  72 \\
         11    &  1144 &   440 & 192 & 172 \\
         12    &  3090 &  1216 &   - &   - \\
         \bottomrule
    \end{tabular}
    \caption{The values of $\maxccount{k}{n}$ for $k\in\{0,1,2,3\}$ and $n\leq 11$. The graphs that obtain these numbers can be seen in \Cref{tab:max_cert}.}
    \label{tab:maxk}
\end{table}

Using a generalised fan construction as described in \cite{GraseggerKoutschanTsigaridas2020}, we can generate graphs of any size for which we can easily compute the realisation count.
The construction essentially glues several copies of a graph on a common subgraph.
For instance, when we glue copies of a rigid graph with $2|V|-2$ edges on a common $K_4$ subgraph,
the resulting graph again has $2|V|-2$ edges.
The realisation number is then just a power of the realisation number of the initial graph since $K_4$ is globally rigid.
From this we get
\begin{equation}\label{eq:bound_genfank1}
	\maxccount{1}{n} \geq 2^{(n-4)\modop(|V|-4)} \cdot
	\ccount{G}^{\!\lfloor(n-4)/(|V|-4)\rfloor} \qquad (n\geq4).
\end{equation}
Indeed, all the graphs attaining $\ccount{G}=\maxccount{1}{n}$ for $n\leq 12$ have a $K_4$ subgraph,
and hence we get the following bound.
\begin{theorem}
	\label{thm:lower-k1}
	The maximal number of realisations $\maxccount{1}{n}$, for $n\geq 4$, satisfies
	\begin{equation*}
		\maxccount{1}{n} \geq 2^{(n-4)\modop 8} \cdot 1216^{\!\lfloor(n-4)/8\rfloor}.
	\end{equation*}
	This means $\maxccount{1}{n}$ grows at least as $\bigl(\!\sqrt[8]{1216}\bigr)^n$, which is approximately $2.43006^n$.
\end{theorem}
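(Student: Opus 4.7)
The plan is to apply the generalised fan construction bound \eqref{eq:bound_genfank1} with a carefully chosen twelve-vertex extremal graph, at which point the theorem becomes a direct substitution.

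First, I would fix the extremal graph $G$. The paragraph preceding the theorem records that $\maxccount{1}{12}=1216$ and that every graph attaining $\maxccount{1}{n}$ for $n\leq 12$ contains a $K_4$ subgraph, so I may select a rigid graph $G$ with $|V(G)|=12$, $|E(G)|=2|V(G)|-2=22$, $\ccount{G}=1216$, and a copy of $K_4$ as a subgraph. This is the only nontrivial input to the proof, and it is furnished by the exhaustive computation underlying \Cref{tab:maxk} (the actual extremal graphs are available from \cite{DatasetRigidRealisations}).

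Next, I would plug $|V|=12$ (so $|V|-4=8$) and $\ccount{G}=1216$ directly into \eqref{eq:bound_genfank1}. This yields
\[
    \maxccount{1}{n} \;\geq\; 2^{(n-4)\modop 8} \cdot 1216^{\!\lfloor(n-4)/8\rfloor},
\]
which is exactly the inequality claimed. Conceptually, this corresponds to gluing $\lfloor(n-4)/8\rfloor$ copies of $G$ along a common $K_4$ (producing the factor $1216^{\lfloor(n-4)/8\rfloor}$, since the global rigidity of $K_4$ lets realisations of each copy be chosen independently), and then appending $(n-4)\modop 8$ additional degree-2 vertices, each of which doubles the realisation count.

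For the asymptotic statement, I would write $n-4=8q+r$ with $0\le r\le 7$. The bound gives $\maxccount{1}{n}\ge 1216^{q}$, and since $q\ge(n-11)/8$, taking $n$-th roots and letting $n\to\infty$ yields $\liminf_{n\to\infty}\maxccount{1}{n}^{1/n}\ge 1216^{1/8}$; a numerical check confirms $1216^{1/8}\approx 2.43006$. There is no real obstacle in the proof itself, as it is essentially an algebraic rewrite of \eqref{eq:bound_genfank1}. The substantive work lies upstream, in the validity of the fan construction cited from \cite{GraseggerKoutschanTsigaridas2020} and in the computer-assisted enumeration that certifies $\maxccount{1}{12}=1216$ while exhibiting a twelve-vertex extremal graph containing $K_4$.
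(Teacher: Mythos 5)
Your proposal is correct and follows essentially the same route as the paper: instantiating the generalised fan bound \eqref{eq:bound_genfank1} with the $12$-vertex extremal graph satisfying $\ccount{G}=1216$ that contains a $K_4$ subgraph, and then reading off the stated inequality and its asymptotic consequence. The paper gives no further argument beyond this substitution, so nothing is missing.
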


Similarly, we can get bounds for any $\maxccount{k}{n}$ by gluing on a globally rigid subgraph with $2|V|-3+k$ edges.
Both $\maxccount{2}{n}$ and $\maxccount{3}{n}$ grow at least as $\left(\sqrt[6]{172}\right)^n$, which is approximately $2.35824^n$.
For $k=3$ this is obtained from the respective graph in \Cref{tab:max_cert} with 11 vertices.
For $k=2$ the graph from the table does not have a globally rigid subgraph with $2|V|-3+k$ edges, so we instead use another graph with $\ccount{G}=172$ (see \Cref{sec:cert}).

Note that these bounds have been found by exhaustive computation on graphs with few vertices, and can probably be improved by further computations on graphs with more vertices.
Although this is doable for individual graphs, the number of graphs on which to run the algorithm quickly becomes unfeasible; for example, there are already 891\,750\,296 rigid graphs on 11 vertices.

\subsection{Computational results for spherical realisation numbers}

We have also computed all spherical realisation numbers for rigid graphs with at most 10 vertices ( see \cite{DatasetRigidRealisations}).
Additionally for $n\geq 11$ we computed graphs with minimum degree 3 which suffices for attaining the maximum.
\Cref{tab:maxk_sphere} shows the maximal spherical realisation number $\maxscount{k}{n}$ for graphs with $n$ vertices and $2n-3+k$ edges. The realisation numbers for $k=0$ have been previously computed in \cite{GalletGraseggerSchicho2020,Grasegger2025}.
\begin{table}[ht]
    \centering
    \begin{tabular}{rrrrr}
         \toprule
         $n=|V|$ & $\maxscount{0}{n}$ & $\maxscount{1}{n}$ & $\maxscount{2}{n}$ & $\maxscount{3}{n}$\\\midrule
          6 &   16 &   4 &   2 &   2 \\
          7 &   32 &  16 &   4 &   4 \\
          8 &   96 &  32 &  16 &  16 \\
          9 &  288 & 128 &  32 &  32 \\
         10 &  768 & 320 & 128 & 128 \\
         11 & 2176 & 896 & 384 & 320 \\ 
         \bottomrule
    \end{tabular}
    \caption{The maximal number of spherical realisations for a given number of vertices $\maxscount{k}{n}$ for $k\in\{0,1,2,3\}$ and $n\leq 10$. Graph that obtain these numbers can be seen in \Cref{tab:max_cert_sphere}.}
    \label{tab:maxk_sphere}
\end{table}

All of the graphs from \Cref{tab:max_cert_sphere} have a suitable globally rigid subgraph on which to use a generalised fan construction.
This yields that $\maxscount{1}{n}$ grows at least as $(\sqrt[7]{896})^n$, which is approximately $2.64094^n$. 
Both $\maxscount{2}{n}$ and $\maxscount{3}{n}$ grow at least as $\left(\sqrt[5]{128}\right)^n$, which is approximately $2.63902^n$.
These bounds are obtained by the values in \Cref{tab:maxk_sphere}; in particular, the corresponding graphs with 10 vertices in \Cref{tab:max_cert_sphere}.
In this case the graphs with 11 vertices do not yield a better bound.
Again, further computations can improve this bound. 

Similarly to \cite[Section~7]{DewarGrasegger2024}, we compare the spherical realisation count to the planar count, but for all rigid graphs.
\Cref{fig:comparison} shows this comparison for graphs with 5--9 vertices.
That a significant proportion of all graphs have a realisation number of 1 is a consequence of the sharp threshold characterisation for Erd\H{o}s-Renyi random graph global rigidity \cite{JacksonServatiusServatius2007,LewNevoPeledRaz2023}.
The full data on rigid realisation numbers for up to 10 vertices is available at \cite{DatasetRigidRealisations}.
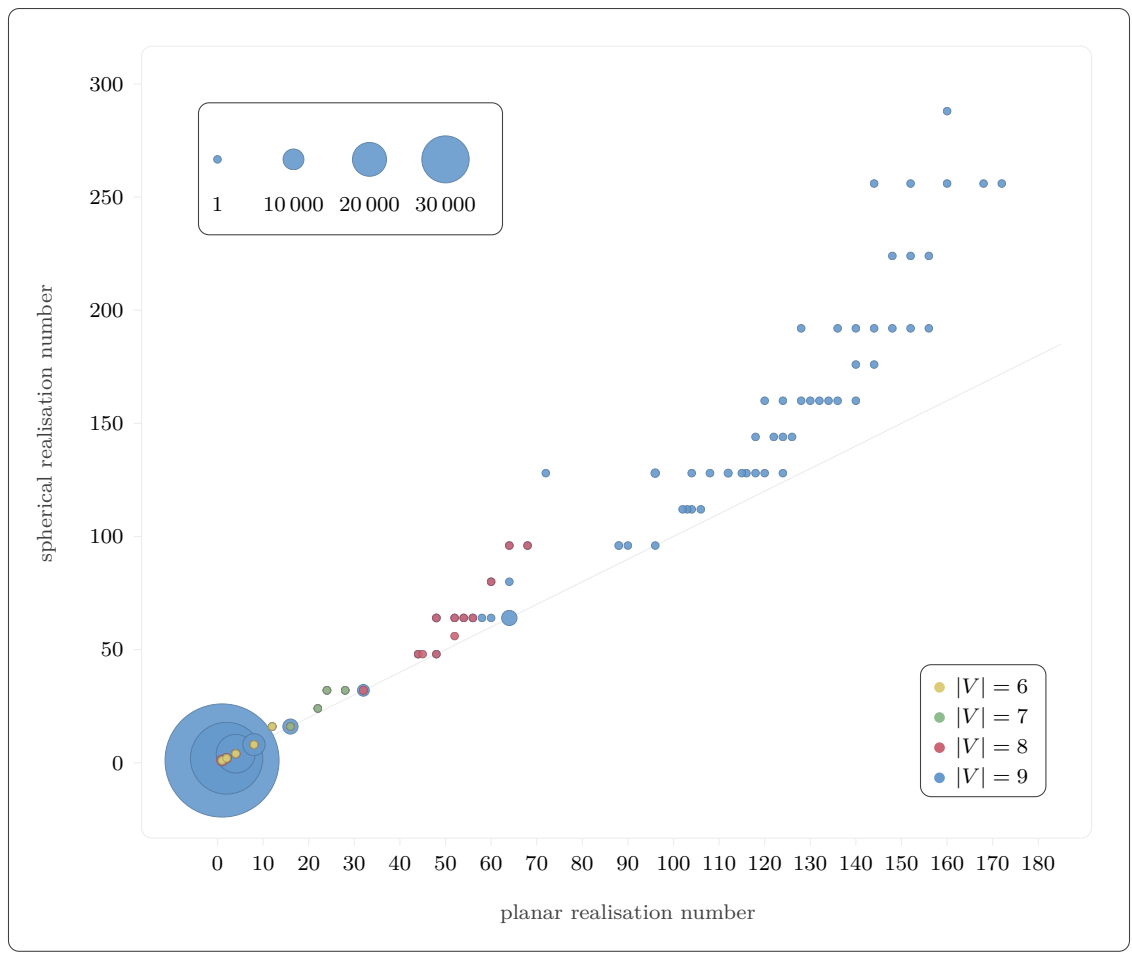
\begin{figure}[ht]
    \centering
    \begin{tikzpicture}
		\draw[cola,rounded corners] (-2.75,-2.5) rectangle (12,10);
		\node[alabelsty,rotate=90] at ($0.03*(0,150)+(-2.25,0)$) {spherical realisation number};
		\node[alabelsty] at ($0.06*(90,0)+(0,-2)$) {planar realisation number};
		\draw[bline,rounded corners] (-1,-1) rectangle (11.5,9.5);
		\foreach \yy [evaluate=\yy as \y using \yy*0.03] in {0,50,100,...,300} \draw[bline] (-1,\y) -- +(-0.1,0) node[left,labelsty] {$\yy$};
		\foreach \xx [evaluate=\xx as \x using \xx*0.06] in {0,10,...,180} \draw[bline] (\x,-1) -- +(0,-0.1) node[below,labelsty] {$\xx$};

		\draw[bline] (0,0)--($0.06*(185,0)+0.03*(0,185)$);
		\foreach \list [count=\i] in {{1/1/1.5,2/2/0.951937,4/4/0.512383,8/8/0.293102,64/64/0.20139,16/16/0.199893,32/32/0.15516,96/128/0.112254,112/128/0.104874,48/64/0.103533,88/96/0.103325,24/32/0.102889,12/16/0.101654,56/64/0.100731,136/192/0.100453,108/128/0.100418,44/48/0.1004,128/192/0.100383,120/160/0.100348,28/32/0.100348,104/128/0.100313,96/96/0.100226,22/24/0.100191,104/112/0.100122,128/160/0.100104,120/128/0.100104,144/192/0.100087,116/128/0.100087,90/96/0.100087,144/256/0.10007,156/192/0.100035,152/192/0.100035,124/160/0.100035,140/192/0.100017,140/160/0.100017,136/160/0.100017,132/160/0.100017,130/160/0.100017,126/144/0.100017,118/128/0.100017,103/112/0.100017,68/96/0.100017,64/96/0.100017,172/256/0.1,168/256/0.1,160/288/0.1,160/256/0.1,156/224/0.1,152/256/0.1,152/224/0.1,148/224/0.1,148/192/0.1,144/176/0.1,140/176/0.1,134/160/0.1,124/144/0.1,124/128/0.1,122/144/0.1,118/144/0.1,115/128/0.1,106/112/0.1,102/112/0.1,72/128/0.1,64/80/0.1,60/80/0.1,60/64/0.1,58/64/0.1,54/64/0.1,52/64/0.1,48/48/0.1},{1/1/0.140817,2/2/0.12731,4/4/0.115387,32/32/0.109121,8/8/0.108616,16/16/0.104717,48/64/0.100818,56/64/0.100244,44/48/0.100191,24/32/0.100174,12/16/0.100122,28/32/0.100017,68/96/0.1,64/96/0.1,60/80/0.1,54/64/0.1,52/64/0.1,52/56/0.1,48/48/0.1,45/48/0.1,22/24/0.1},{1/1/0.10228,2/2/0.10148,16/16/0.101097,4/4/0.100975,8/8/0.100522,24/32/0.100052,28/32/0.1,22/24/0.1,12/16/0.1},{1/1/0.100244,8/8/0.100191,2/2/0.100139,4/4/0.10007,12/16/0.1}}
		{
			\foreach \xx/\yy/\si [evaluate=\xx as \x using \xx*0.06,evaluate=\yy as \y using \yy*0.03] in \list
			{
				\node[dvertex,draw=col\i!80!black,fill=col\i,minimum size=\si cm,opacity=0.9] at (\x,\y) {};
			}
		}

		\begin{scope}
			\coordinate (s) at (0,-0.4);
			\coordinate (l) at ($(9.5,1)$);
			\draw[cola,fill=white,rounded corners] ($(l)+(-0.25,0.3)$) rectangle ++(1.65,-1.75);
			\node[vertex,col4,legend,label={[labelsty]0:$|V|=6$}] at (l) {};
			\node[vertex,col3,legend,label={[labelsty]0:$|V|=7$}] at ($(l)+(s)$) {};
			\node[vertex,col2,legend,label={[labelsty]0:$|V|=8$}] at ($(l)+2*(s)$) {};
			\node[vertex,col1,legend,label={[labelsty]0:$|V|=9$}] at ($(l)+3*(s)$) {};
		\end{scope}
		\begin{scope}[xshift=-1cm,yshift=8cm]
			\draw[cola,fill=white,rounded corners] (0.75,-1) rectangle ++(4,1.75);
			\foreach \s/\t [count=\i] in {0.1/1, 0.274043/10000, 0.448103/20000, 0.622163/30000}
			{
				\node[dvertex,draw=col1!80!black,fill=col1,minimum size=\s cm,opacity=0.9] at (\i,0) {};
				\draw[bline] (\i,-0.5)--++(0,-0.1) node[labelsty] {\pgfmathprintnumber{\t}};
			}
		\end{scope}
	\end{tikzpicture}
    \caption{Spherical realisation numbers compared to the planar one. The size of the circles shows the amount of graphs with the respective numbers.}
    \label{fig:comparison}
\end{figure}

\addcontentsline{toc}{section}{Acknowledgments}
\section*{Acknowledgements}
This research was funded in part by the Austrian Science Fund (FWF)
10.55776/I6233. For open access purposes, the authors have applied a CC BY public copyright license to any author accepted manuscript version arising from this submission.

S.D.\ was supported by was supported by the Heilbronn Institute for Mathematical Research.
G.G.\ was partially supported by the Austrian Science Fund (FWF): 10.55776/P31888.
A.W.\ is partially supported by the Austrian Science Fund (FWF): 10.55776/PAT2559123.

\bibliography{rna}
\bibliographystyle{alphaurl}

\appendix

\section{The degree of a graph map is the realisation number of a graph}\label{app}

In this section we prove the following equality.

\begin{proposition}\label{prop}
    If a graph $G$ with at least two vertices is rigid then $\deg(p_G) = c_2(G)$.
\end{proposition}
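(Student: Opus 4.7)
The idea is to exploit the factorisation $f_G = p_G \circ f_{K_{V(G)}}$ and analyse how $E_2$-orbits interact with the two maps separately. Write $V=V(G)$. The natural commutative picture is
\[
    \mathbb{C}^{2|V|} \xrightarrow{\,f_{K_V}\,} R_V \xrightarrow{\,p_G\,} I_G,
\]
so every point of $f_G^{-1}(f_G(\rho))$ lies in $f_{K_V}^{-1}(p_G^{-1}(p_G(f_{K_V}(\rho))))$. The strategy is to show that, for generic $\rho$, the outer preimage decomposes into exactly $\deg(p_G)$ distinct $E_2$-orbits.

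First I would establish the following key fact about $f_{K_V}$: for generic $\rho \in \mathbb{C}^{2|V|}$ the fibre $f_{K_V}^{-1}(f_{K_V}(\rho))$ is exactly a single $E_2$-orbit. One inclusion is clear, since $f_{K_V}$ is $E_2$-invariant by construction of squared distances. For the reverse inclusion, the orbit $E_2 \cdot \rho$ has dimension $\dim E_2 - \dim \mathrm{Stab}(\rho) = 3$ for generic $\rho$ (the stabiliser being at most discrete once $|V|\ge 2$), while the generic fibre of $f_{K_V}$ has dimension $2|V| - (2|V|-3) = 3$ because $\dim R_V = 2|V|-3$. So the orbit is at least a component of the fibre. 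Genericity of the orbit-to-fibre map — equivalently, the statement that two generic realisations of $K_V$ with the same squared distances are related by an element of $E_2$ — is the complexification of the classical global rigidity of the complete graph, and can be deduced by induction on $|V|$ after placing the first two vertices in a generic normal form.

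Next I would use this to count orbits in $f_G^{-1}(f_G(\rho))$. Because $G$ is rigid, $p_G$ is dominant and generically finite of degree $d := \deg(p_G)$. For generic $\rho$, set $\sigma := f_{K_V}(\rho)$ and let $p_G^{-1}(p_G(\sigma)) = \{\sigma_1,\dots,\sigma_d\}$ with $\sigma_1 = \sigma$. By choosing $\rho$ generically and invoking standard openness statements (the locus in $R_V$ over which $p_G$ is étale of degree $d$ is open, and one may assume each $\sigma_i$ is generic in $R_V$ since $f_{K_V}$ is dominant), the previous paragraph applies to every $\sigma_i$. Hence
\[
    f_G^{-1}(f_G(\rho)) \;=\; \bigsqcup_{i=1}^{d} f_{K_V}^{-1}(\sigma_i)
\]
is a disjoint union of exactly $d$ distinct $E_2$-orbits. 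Quotienting by $E_2$ gives $|f_G^{-1}(f_G(\rho))/E_2| = d$, which is precisely the definition of $\ccount{G}$.

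\textbf{Main obstacle.} The substantive step is the single-orbit claim for generic fibres of $f_{K_V}$, since this is the only place where the geometry of $E_2$ (rather than just dimension counts) enters. The dimension count shows the orbit is a component of the fibre but not that it exhausts it; one must also rule out that different $E_2$-orbits of generic realisations could collapse to the same squared-distance vector. After that, the rest of the argument is bookkeeping with degrees of dominant generically finite maps and genericity openness, both of which are standard.
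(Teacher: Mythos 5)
Your proposal is correct and follows essentially the same route as the paper: both factor $f_G$ as $p_G \circ f_{K_{V(G)}}$ and reduce everything to the fact that a generic fibre of $f_{K_{V(G)}}$ is a single $E_2$-orbit. The paper packages that fact as \Cref{lem:strcon}\ref{item:strcon2}, citing Jackson--Owen (Lemmas 3.1 and 3.4) rather than re-deriving it by trilateration, and then counts connected components of $f_G^{-1}(\lambda)$, importing from Jackson--Owen that there are $2\ccount{G}$ of them, each an orbit of the orientation-preserving part of $E_2$; you instead count $E_2$-orbits directly, which avoids the factor-of-two bookkeeping with reflections. You have correctly isolated the one non-formal ingredient (the single-orbit claim), but one justification in your plan should be repaired: the assertion that each $\sigma_i \in p_G^{-1}(p_G(\sigma))$ may be assumed generic in $R_{V(G)}$ does not follow from the dominance of $f_{K_{V(G)}}$. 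The correct argument is that each $\sigma_i$ is algebraic over the generic point $\lambda = p_G(\sigma)$ of $I_G$, so its coordinates have transcendence degree $\dim I_G = \dim R_{V(G)}$ over $\mathbb{Q}$, and hence $\sigma_i$ avoids every proper subvariety of $R_{V(G)}$ defined over $\overline{\mathbb{Q}}$ --- in particular the locus where the fibre of $f_{K_{V(G)}}$ is empty or fails to be a single orbit. With that repair, the orbit count $\lvert f_G^{-1}(f_G(\rho))/E_2 \rvert = \deg(p_G)$ goes through.
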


For our proof, we require the following lemma.

\begin{lemma}\label{lem:strcon}
    Let $G$ be a rigid graph and $\lambda \in I_G$ be a generic point.
    \begin{enumerate}[label=(\roman*)]
        \item \label{item:strcon1} Each connected component of $f^{-1}_G(\lambda)$ contains a generic realisation.
        \item \label{item:strcon2} If $\rho_1,\rho_2 \in f^{-1}_G(\lambda)$ satisfy $f_{K_{V(G)}}(\rho_1) = f_{K_{V(G)}}(\rho_2)$,
        then there exists an affine transformation $h \in E_2$ such that $h(\rho_1) = h(\rho_2)$.
    \end{enumerate}
\end{lemma}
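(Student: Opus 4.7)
The plan is to prove both parts by studying how the measurement maps factor through the $E_2$-quotient structure, using $R_V$ as a concrete model for $\mathbb C^{2|V|}/E_2$.

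I would first prove part~\ref{item:strcon2}, which is the classical statement that a configuration in $\mathbb C^2$ is determined up to the complexified Euclidean group by its full vector of squared pairwise distances; as such it does not require part~\ref{item:strcon1}. Since $2(a \cdot b) = Q(a) + Q(b) - Q(a-b)$ for the symmetric bilinear form $Q(v) = v_1^2 + v_2^2$, the Gram matrix of the centered realisation $\rho_1 - c_1$ (relative to some distinguished vertex) is determined by $f_{K_{V(G)}}(\rho_1)$ alone; the same Gram matrix arises from $\rho_2 - c_2$. Standard linear algebra over $\mathbb C$ then produces an element of $O(2,\mathbb C)$ conjugating the two centered configurations, and composing with the translation that moves $c_1$ to $c_2$ yields the required $h \in E_2$ with $h \cdot \rho_1 = \rho_2$.

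For part~\ref{item:strcon1}, I would use that $f_G = p_G \circ f_{K_{V(G)}}$, where $f_{K_{V(G)}} \colon \mathbb C^{2|V|} \to R_{V(G)}$ is $E_2$-invariant with generic fibres of dimension $3$, and $p_G \colon R_{V(G)} \to I_G$ is dominant with $\dim R_{V(G)} = \dim I_G = 2|V|-3$ and generically finite because $G$ is rigid. For a generic $\lambda \in I_G$, the fibre $p_G^{-1}(\lambda)$ is a finite set of points of $R_{V(G)}$, each of which is itself a generic point of $R_{V(G)}$ in the sense of the paper's footnote (a finite dominant morphism of irreducible varieties of the same dimension pulls generic points back to generic points). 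Using part~\ref{item:strcon2}, each point $d \in p_G^{-1}(\lambda)$ has preimage $f_{K_{V(G)}}^{-1}(d)$ equal to a single $E_2$-orbit $C_d \subset f_G^{-1}(\lambda)$, an irreducible $3$-dimensional subvariety. These orbits are pairwise disjoint by construction, and their $\SE$-halves (one for each connected component of $E_2$) account for the $2c_2(G)$ connected components of $f_G^{-1}(\lambda)$. Inside each component I would produce a generic realisation by applying a generic element of $\SE$ to any chosen $\rho \in C_d$: since the orbit map $\SE \to \SE\cdot \rho$ is birational onto its image (the stabiliser of any realisation with at least two distinct vertex positions being finite), generic inputs yield outputs whose coordinates attain the maximal transcendence degree over $\mathbb Q(\lambda)$.

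The main obstacle I anticipate is rigorously justifying the transfer of genericity from $\lambda$ to the points of $p_G^{-1}(\lambda)$ and further to the chosen realisation in each component. Both steps rely on standard facts (finite dominant morphisms of equidimensional irreducible varieties pull generic points back to generic points; orbit maps of algebraic group actions with finite stabiliser are birational onto their image), but stating them cleanly at the level required by the paper's definition of generic, and then correctly matching the $E_2$-orbits to the connected components (rather than the irreducible components, which differ by a factor of two because $E_2$ has two components), is where the proof needs the most care. An alternative that sidesteps $R_{V(G)}$ is to pin two prescribed vertices and work on the resulting affine slice, but the present formulation is preferable because the degree count in \Cref{prop} then follows directly.
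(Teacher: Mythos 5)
The paper itself does not prove this lemma from scratch: its proof is a two-line citation to Lemmas 3.1, 3.2 and 3.4 of Jackson and Owen \cite{JacksonOwen2019}. Your proposal is therefore necessarily a different route, and its overall architecture is sound: deduce \ref{item:strcon2} from a congruence argument, use it to identify the fibres of $f_{K_{V(G)}}$ over the finitely many (individually generic) points of $p_G^{-1}(\lambda)$ with single $E_2$-orbits, and transfer genericity along the chain from $\lambda$ to $p_G^{-1}(\lambda)$ to a point of the orbit via the two standard facts you isolate. The transcendence-degree bookkeeping in \ref{item:strcon1} does close up: for $\rho' = h(\rho_0)$ with $h \in \SE$ generic over $\mathbb{Q}(\rho_0)$ one gets $\operatorname{trdeg}_{\mathbb{Q}}\mathbb{Q}(\rho') = \operatorname{trdeg}_{\mathbb{Q}}\mathbb{Q}(d) + \operatorname{trdeg}_{\mathbb{Q}(d)}\mathbb{Q}(d)(\rho') = (2|V|-3)+3 = 2|V|$ --- \emph{provided} the orbit is $3$-dimensional.

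The genuine gap is in part \ref{item:strcon2}, at the words ``standard linear algebra over $\mathbb{C}$''. For the symmetric (non-Hermitian) form on $\mathbb{C}^2$, equality of Gram matrices does \emph{not} imply $O(2,\mathbb{C})$-equivalence: with $A = (1,i)^T$ and $B=(0,0)^T$ one has $A^TA = 0 = B^TB$, yet no invertible $T$ sends $A$ to $B$. Concretely, a configuration supported on an isotropic line has all squared pairwise distances zero and is indistinguishable, by distance data, from a totally coincident configuration, though the two are not $E_2$-congruent; this degeneracy is exactly what makes the complex congruence theory delicate and is the content of the Jackson--Owen lemmas the paper cites. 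Your argument therefore needs a non-degeneracy step: for generic $\lambda$, every $\rho_i \in f_G^{-1}(\lambda)$ must be shown to span $\mathbb{C}^2$ affinely (or at least a non-isotropic line), after which the Witt-type extension to $O(2,\mathbb{C})$ is legitimate. The fix is available from your own pullback-of-genericity observation --- $\mu := f_{K_{V(G)}}(\rho_i)$ lies in $p_G^{-1}(\lambda)$, hence is a generic point of $R_{V(G)}$, so no entry of $\mu$ and no Cayley--Menger minor vanishes --- but it must be said, and it means \ref{item:strcon2} is not independent of the genericity machinery in the way your write-up suggests. The same non-degeneracy is used silently in \ref{item:strcon1} when you assert that the stabiliser is finite and the orbit $3$-dimensional.
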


\begin{proof}
    \ref{item:strcon1} follows from a using a slight adaptation of \cite[Lemma 3.2]{JacksonOwen2019} and \ref{item:strcon2} follows \cite[Lemmas 3.1 \& 3.4]{JacksonOwen2019}.
\end{proof}

\begin{proof}[Proof of \Cref{prop}]
    First, we observe the following factorisation of the measurement map into dominant maps:
    \begin{center}       
        \begin{tikzpicture}
            \node[] (r)at (0,0) {$\mathbb C^{2|V(G)|}$};
            \node[] (p) at (4,0) {$R_{V(G)}$};
            \node[] (c) at (4,-2) {$I_G$};
            \draw[cdarrow] (r) to node[above,labelsty] {$f_{K_{V(G)}}$} (p);
            \draw[cdarrow] (r) to node[below,labelsty] {$f_G$} (c);
            \draw[cdarrow] (p) to node[right,labelsty] {$p_G$} (c);
        \end{tikzpicture}
    \end{center}
    Since $G$ is rigid, $p_G$ is generically finite.
    Choose a generic point $\lambda \in I_G$.
    By \Cref{lem:strcon}\ref{item:strcon1}, this is equivalent to choosing a generic realisation $\rho \in \mathbb{C}^{2|V(G)|}$ and fixing $\lambda = f_G(\rho)$.
    Fix $p_G^{-1}(\lambda) = \{\mu_1, \ldots , \mu_s\}$ and fix $C_{ 1},\ldots,C_{t}$ to be the connected components of $f^{-1}_G(\lambda)$.    
    By \Cref{lem:strcon}\ref{item:strcon1} and \cite[Theorem 3.6]{JacksonOwen2019}, $t= 2c_2(G)$, with each connected component consisting of the orbit of a single realisation under the orientation-preserving\footnote{Every affine map $E_2$ is of the form $z \mapsto T(z) + z_0$ for some vector $z_0 \in \mathbb{C}^2$ and some linear $2 \times 2$ matrix $T$ where $T^T T = TT^T = I_2$. Either $\det(T) = 1$ and $T$ is orientation-preserving, or $\det(T) = -1$ and $T$ is not orientation-preserving.} affine transformations of $E_2$;
    moreover, if we set $c=c_2(G)$, we can relabel our connected components $C_{\pm 1}, \ldots, C_{\pm c}$ so that $C_{-i}$ is the image of $C_i$ under the reflection $(x,y) \mapsto (x,-y)$.

    For each $i \in \{\pm 1, \ldots, \pm c\}$,
    set $\sigma(i) \in \{1,\ldots,s\}$ to be the index such that $f_{K_{V(G)}}(C_i) = \{\mu_{\sigma(i)}\}$.
    It is immediate that $\sigma(-i) = \sigma(i)$.
    As each map in the above diagram is dominant and $p_G$ is generically finite,
    the genericity of $\lambda$ implies that the map $\sigma$ is surjective (and so $s \leq 2c$).
    Suppose that there exists $i,j \in \{1,\ldots,c\}$ such that $\sigma(i) = \sigma(j)$.
    This implies the existence of realisations $\rho_i,\rho_j \in f_G^{-1}(\lambda)$ where $f_{K_{V(G)}}(\rho_i) = f_{K_{V(G)}}(\rho_j)$ but such that $h(\rho_i) \neq h(\rho_j)$ for each $h \in E_2$.
    However, this now directly contradicts \Cref{lem:strcon}\ref{item:strcon2}.    
    Hence, $s  = 2c$, which implies the desired result.
\end{proof}

\section{Graph encodings}\label{sec:cert}

Here a graph is encoded as in \cite{GraseggerKoutschanTsigaridas2020} by an integer,
derived from the upper triangular part of adjacency matrix read row-wise as a binary number.
For instance the triangle graph is denoted by $(1,1,1)_2=7$ and $K_4$ minus on edge as $(011111)_2=31$.
\pyrigi\ \cite{PyRigi} contains python methods to decode and encode graphs.

In the following (\Cref{tab:max_cert,tab:max_cert_sphere}) we provide graphs which obtain the realisations numbers that are used in computational results (\Cref{sec:compute}).

\begin{table}[ht]
    \centering\small
    \begin{tabular}{rrrr}
        \toprule
        $|V|$ & $k=1$ & $k=2$ & $k=3$ \\\midrule
         6 & 3327 & 3583 & 4095 \\
         7 & 1624383 & 101887 & 102399 \\
         8 & 155852367 & 210799359 & 204542975 \\
         9 & 9548896180 & 45234555391 & 43630233599 \\
        10 & 20347466531983 & 17801747326540 & 6709897659391 \\
        11 & 19423424626348167 & 739685790686724 & 2626220166634959 \\
        12 & 9601886131857279073  \\\bottomrule
    \end{tabular}
    \caption{Certificate graphs for the realisation numbers in \Cref{tab:maxk}.}
    \label{tab:max_cert}
\end{table}

The graph $G$ with $\ccount{G}=172$ which is the maximal value for graphs with 11 vertices, $2|V|-1$ edges and a globally rigid subgraph $H$ with $|E(H)| = 2|V(H)|-1$, has integer encoding 23084260116373631.

\begin{table}[ht]
    \centering\small
    \begin{tabular}{rrrr}
        \toprule
        $|V|$ & $k=1$ & $k=2$ & $k=3$ \\\midrule
         6 & 3327 & 3583 & 4095 \\
         7 & 1624383 & 101887 & 102399 \\
         8 & 7156974 & 210799359 & 204542975 \\
         9 & 9548896180 & 975773247 & 1009849343 \\
        10 & 4778694408096 & 6086548036671 & 6709897659391  \\
        11 & 5916760438521919 & 18190583547111768 & 10213445215953215 \\\bottomrule
    \end{tabular}
    \caption{Certificate graphs for the realisation numbers in \Cref{tab:maxk_sphere}.}
    \label{tab:max_cert_sphere}
\end{table}

\end{document}